\documentclass{amsart}

\usepackage{xspace, graphicx, epsfig, amssymb, amsmath, amsthm, stmaryrd}

\newtheorem{theorem}{Theorem}[section]
\newtheorem{lemma}[theorem]{Lemma}

\theoremstyle{definition}
\newtheorem{definition}[theorem]{Definition}

\newtheorem{prop}[theorem]{Proposition}
\newtheorem{cor}[theorem]{Corollary}

\theoremstyle{remark}

\numberwithin{equation}{section}

\newcommand {\Z} {\mathbb{Z}}

\newcommand {\C} {\mathbb{C}}

\newcommand {\End} {\mathrm{End}\:}

\newcommand {\St} {\mathrm{St}}

\def\gl{\mathfrak {gl}}

\def\g{\mathfrak {g}}
\def\l{\mathfrak {l}}

\def\n{\mathfrak {n}}
\def\p{\mathfrak {p}}
\def\sl{\mathfrak{sl}}
\def\so{\mathfrak{so}}
\def\sp{\mathfrak{sp}}

\def\k{\mathfrak{k}}
\def\q{\mathfrak{q}}
\def\r{\mathfrak{r}}
\def\s{\mathfrak{s}}
\def\F{\mathfrak{F}}
\def\G{\mathfrak{G}}

\def\Ch{\mathcal{C}}
\def\Dh{\mathcal{D}}

\newcommand {\Span}{\operatorname{Span}}

\newcommand{\Sym}{\operatorname{Sym}}

\begin{document}

\title{Levi components of parabolic subalgebras of finitary Lie algebras}

\author{Elizabeth Dan-Cohen}
\address{Jacobs University Bremen, Campus Ring 1, 28759 Bremen, Germany}
\email{elizabeth.dancohen@gmail.com}

\author{Ivan Penkov}
\address{Jacobs University Bremen, Campus Ring 1, 28759 Bremen, Germany}
\email{i.penkov@jacobs-university.de}

\thanks{Both authors acknowledge the partial support of the DFG through Grants PE 980/2-1 and PE 980/3-1.}

\subjclass[2010]{Primary 17B65; Secondary 17B05}

\date{August 1, 2010}

\keywords{simple finitary Lie algebra, parabolic subalgebra, Levi component}

\begin{abstract}
We characterize locally semisimple subalgebras $\l$ of $\sl_\infty$, $\so_\infty$, and $\sp_\infty$ which are Levi components of parabolic subalgebras.  Given $\l$, we describe the parabolic subalgebras $\p$ such that $\l$ is a Levi component of $\p$.  We also prove that not every maximal locally semisimple subalgebra of a finitary Lie algebra is a Levi component.

When the set of self-normalizing parabolic subalgebras $\p$ with fixed Levi component $\l$ is finite, we prove an estimate on its cardinality.  We consider various examples which highlight the differences from the case of parabolic subalgebras of finite-dimensional simple Lie algebras.
\end{abstract}

\maketitle

\section{Introduction}

The foundations of the theory of finitary Lie algebras have been laid in \cite{Baranov1, Baranov2, BaranovStrade, PStrade}.  This has made possible the development of a more detailed structure theory for the finitary Lie algebras \cite{NeebP, DPSnyder, DimitrovP1, D, DP, DPWolf, DimitrovP2}.  In particular, the notions of Levi components and parabolic subalgebras were developed for finitary Lie algebras in \cite{DP}.  Nevertheless, the problem of an explicit description of all Levi component of parabolic subalgebras was not addressed there.  This is the purpose of the present paper.  More precisely, we identify the subalgebras which occur as the Levi component of a simple finitary Lie algebra, and we characterize all parabolic subalgebras of which a given subalgebra is a Levi component.  In addition,  we provide criteria for the number of self-normalizing parabolic subalgebra with a prescribed Levi component to be finite; note that the finite numbers which occur can be quite unlike those in the finite-dimensional case. 

Along the way we present examples to highlight the many differences between the finitary and finite-dimensional situations.  One phenomenon seen here for the first time is a maximal locally semisimple subalgebra of a parabolic subalgebra which is not a Levi component of the parabolic subalgebra.  This answers a question left open in \cite{DP}.  It follows immediately that a maximal locally reductive subalgebra of a parabolic subalgebra is not in general a locally reductive part of the parabolic subalgebra.  Moreover, we give examples in which a maximal locally reductive subalgebra of a parabolic subalgebra, despite containing a Levi component, nevertheless is not a locally reductive part of the parabolic subalgebra.

\section{Preliminaries}

\subsection{Background on locally finite Lie algebras}

Let $V$ and $V_*$ be countable-dimensional vector spaces over the complex numbers, together with a nondegenerate pairing $\langle \cdot , \cdot \rangle : V \times V_* \rightarrow \C$.  A subspace $F \subset V$ is said to be \emph{closed} (in the Mackey topology) if $F = F^{\perp \perp}$.  By a result of Mackey \cite{Mackey}, the vector spaces $V$ and $V_*$ admit dual bases: that is, there are bases $\{v_i \mid i \in I\}$ and $\{ v^i \mid i \in I \}$ of $V$ and $V_*$, respectively, such that $\langle v_i , v^j \rangle = \delta_{ij}$.  We denote by $\gl(V,V_*)$ the Lie algebra associated to the associative algebra $V \otimes V_*$ with multiplication
\begin{align*}
V \otimes V_* \times V \otimes V_* & \rightarrow V \otimes V_* \\
(v \otimes w , v' \otimes w' ) & \mapsto \langle v' , w \rangle v \otimes w'.
\end{align*}
The vectors $v_i \otimes v^j$ form a basis for $\gl(V,V_*)$, with commutation relations $[ v_i \otimes v^j , v_k \otimes v^l ] = \delta_{jk} v_i \otimes v^l - \delta_{il} v_k \otimes v^j$.  Thus $\gl(V,V_*) \cong \gl_\infty$, where $\gl_\infty$ is the direct limit of the system
\begin{eqnarray*}
\gl_n \rightarrow \gl_{n+1} && A \mapsto \left( \begin{matrix} A & 0 \\ 0 & 0 \end{matrix} \right) .
\end{eqnarray*}
The Lie algebras $\sl_\infty$, $\so_\infty$, and $\sp_\infty$ are similarly defined; that is, they are direct limits of systems of finite-dimensional simple Lie algebras where the natural representation of each successive Lie algebra considered as a representation of the previous Lie algebra decomposes as a direct sum of the natural representation plus a trivial representation.  We denote by $\sl(V,V_*)$ the commutator subalgebra of $\gl(V,V_*)$, so $\sl(V,V_*) \cong \sl_\infty$.

Suppose $V = V_*$.  When the pairing $V \times V \rightarrow \C$ is symmetric, then we denote by $\so(V)$ the Lie algebra $\Lambda^2 V \cong \so_\infty$.  When the pairing $V \times V \rightarrow \C$ is antisymmetric, then we denote by $\sp(V)$ the Lie algebra $\Sym^2 V \cong \sp_\infty$.   A subspace $F \subset V$ is called \emph{isotropic} if $\langle F, F \rangle = 0$ and \emph{coisotropic} if $F^\perp \subset F$.

We fix notation for maps
\begin{align*}
\Lambda : \gl(V,V) & \rightarrow \so(V) \\
v \otimes w & \mapsto v \otimes w - w \otimes v \\
\intertext{and}
S : \gl(V,V) & \rightarrow \sp(V) \\
v \otimes w & \mapsto v \otimes w + w \otimes v,
\end{align*}
and note that they give homomorphisms of Lie algebras when restricted to $\gl(X,Y)$ for any isotropic subspaces $X$, $Y \subset V$ such that the restriction $\langle \cdot , \cdot \rangle |_{X \times Y}$ is nondegenerate.

We call $\s$ a \emph{standard special linear subalgebra} of $\gl(V,V_*)$ or $\sl(V,V_*)$ if 
$$\s = \sl(X,Y)$$
for some subspaces $X \subset V$ and $Y \subset V_*$ such that the restriction $\langle \cdot , \cdot \rangle |_{X \times Y}$
 is nondegenerate.  We call $\s$ a \emph{standard special linear subalgebra} of $\so(V)$ (resp., of $\sp(V)$) if 
$$\s = \Lambda( \sl(X,Y))$$
(resp., if $\s = S( \sl(X,Y) ) $)
for some isotropic subspaces $X$, $Y \subset V$ such that the restriction
$\langle \cdot , \cdot \rangle |_{X \times Y}$ is nondegenerate. 

A Lie algebra $\g$ is said to be \emph{finitary} if there exists a faithful countable-dimensional representation $\g \hookrightarrow \End W$ where $W$ has a basis in which the matrix of each endomorphism in the image of $\g$ has only finitely many nonzero entries.  Suppose $\g$ is a finitary Lie algebra, and let $\{ w_i \mid i \in I \}$ be a basis of a representation $W$ of $\g$ as in the definition of finitary.  For each i, define $w^i \in W^*$ by $w^i (w_j) := \delta_{ij}$;  let $W_*:= \Span \{ w^i \mid i \in I \}$, so that $W_*$ is a countable-dimensional subspace of the full algebraic dual space $W^*$.  Then the map $\g \hookrightarrow \End W$ factors through $\gl(W,W_*)$, yielding an injective homomorphism $\g \hookrightarrow \gl(W,W_*)$.

A Lie algebra $\g$ is \emph{locally finite} if every finite subset of $\g$ is contained in a finite-dimensional subalgebra.  A countable-dimensional locally finite Lie algebra $\g$ is therefore the direct limit of a system of injective homomorphisms of finite-dimensional Lie algebras $\g_n \hookrightarrow \g_{n+1}$ for $n \in \Z_{>0}$.  Observe that any finitary Lie algebra, being isomorphic to a subalgebra of $\gl_\infty$, is itself locally finite.

A locally finite Lie algebra $\g$ is \emph{locally solvable} (respectively, \emph{locally nilpotent}) if every finite-dimensional subalgebra of $\g$ is solvable (resp., nilpotent).  A locally finite Lie algebra $\g$ is \emph{locally simple} (resp., \emph{locally semisimple}) if there exists a system of finite-dimensional simple (resp., semisimple) subalgebras $\g_i$ of which $\g$ is the direct limit.  A locally finite Lie algebra $\g$ is \emph{locally reductive} if it is the direct limit of some system of finite-dimensional reductive subalgebras $\g_n \hookrightarrow \g_m$ where $\g_m$ is a semisimple $\g_n$-module for all $n < m$. 

A subalgebra $\g$ of a locally finite Lie algebra is called \emph{parabolic} if it contains a maximal locally solvable subalgebra of $\g$.

Let $\q$ be a subalgebra of a locally reductive Lie algebra $\g$, and let $\r$ denote the locally solvable radical of $\q$.  The \emph{linear nilradical} $\n_\q$ of $\q$ is the set of Jordan nilpotent elements of $\r$ (see \cite{DPSnyder} for the details of Jordan decomposition in a locally reductive Lie algebra).  One may check that $\n_\q$ is a locally nilpotent ideal of $\q$; see \cite{DP} where the proof is given in the finitary case, and it generalizes.  

A subalgebra $\l \subset \q$ is a \emph{Levi component} of $\q$ if $[\q,\q] = (\r \cap [\q,\q]) \subsetplus \l$.  A locally reductive subalgebra $\q_{red}$ of $\q$ is a \emph{locally reductive part} of $\q$ if $\q = \n_\q \subsetplus \q_{red}$.

A subalgebra $\q \subset \g$ is \emph{splittable} if the nilpotent and semisimple parts of each element of $\q$ are also in $\q$.  The fact that every splittable subalgebra of $\gl_\infty$ has a locally reductive part was shown in \cite{DP}.  If $\q \subset \gl_\infty$ is splittable, then any subalgebra of $\q$ containing $\n_\q + [\q,\q]$ is said to be \emph{defined by trace conditions} on $\q$ \cite{DP}.  (Note that any vector space containing $\n_\q + [\q,\q]$ and contained in $\q$ is a subalgebra of $\g$.)

Let $X$ be a vector space, and let $\Ch$ be a set of subspaces of $X$ on which inclusion gives a total order.  Suppose $F' \subsetneq F''$ are subspaces of $X$ in $\Ch$.  We call $F'$ the \emph{immediate predecessor} of $F''$ in $\Ch$ if for all $C \in \Ch$ either $C \subset F'$ or $F'' \subset C$.  When $F'$ is the immediate predecessor of $F''$ in $\Ch$, we also say that $F''$ is the \emph{immediate successor} of $F'$ in $\Ch$, and that $F' \subset F''$ are an \emph{immediate predecessor-successor pair} in $\Ch$.  

\begin{definition} \cite{DimitrovP1}
A set $\F$ of subspaces of $X$ for which inclusion gives a total order is called a \emph{generalized flag} if the following two conditions hold:
\begin{enumerate}
\item For all $F \in \F$, there is an immediate predecessor-successor pair $F' \subset F''$ in $\F$ such that $F \in \{ F' , F'' \}$;
\item For all nonzero $v \in X$, there is an immediate predecessor-successor pair $F' \subset F''$ in $\F$ such that $v \in F'' \setminus F'$.
\end{enumerate}
\end{definition}

For any generalized flag $\F$, we denote by $A$ the set of immediate predecessor-successor pairs of $\F$.  Then by definition we have $\F = \{F'_\alpha , F''_\alpha \}_{\alpha \in A}$, where $F'_\alpha$ is the immediate predecessor of $F''_\alpha$ in the inclusion order, and the two subspaces are the pair $\alpha \in A$.   Similarly, we denote by $B$ the set of immediate predecessor-successor pairs of any generalized flag denoted by $\G$, so that $\G = \{G'_\beta, G''_\beta \}_{\beta \in B}$.  

For any generalized flag $\F$ in $V$, the stabilizer of $\F$ in $\gl(V,V_*)$ is denoted by $\St_\F$ and is given by the formula $\St_\F = \sum_{\alpha \in A} F''_\alpha \otimes (F'_\alpha)^\perp$ \cite{DimitrovP1}.

\subsection{Background on parabolic subalgebras of finitary Lie algebras}

Recall that there is a nondegenerate pairing $\langle \cdot , \cdot \rangle : V \times V_* \rightarrow \C$.
We say that a generalized flag $\F$ in $V$ is \emph{semiclosed} if $(F')^{\perp \perp} \in \{ F' , F'' \}$
for each immediate predecessor-successor pair $F' \subset F''$ in $\F$.  

\begin{prop} \label{genf}
 Let $\Ch$ be a set of subspaces of $V$ totally ordered by inclusion.  Then the following exist:
\begin{enumerate}
\item a generalized flag in $V$ with the same $\gl(V,V_*)$-stabilizer as $\Ch$ \cite{DimitrovP1};
\item \label{uniquesemic}  a unique semiclosed generalized flag in $V$ with the same $\gl(V,V_*)$-stabilizer as $\Ch$, if each nonclosed subspace in $\Ch$ is the immediate successor in $\Ch$ of a closed subspace.
\end{enumerate}
\end{prop}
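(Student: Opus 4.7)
Part (1) is cited from \cite{DimitrovP1}, so the work lies entirely in part (2). My plan is to reduce existence to (1) by a closure-enlargement of $\mathcal{C}$, and to obtain uniqueness from the stabilizer formula $\St_{\mathcal{F}} = \sum_\alpha F''_\alpha \otimes (F'_\alpha)^\perp$ combined with semiclosedness.

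\textbf{Existence.} I would set $\tilde{\mathcal{C}} := \mathcal{C} \cup \{C^{\perp\perp} : C \in \mathcal{C}\}$. Monotonicity of $(\cdot)^{\perp\perp}$ keeps $\tilde{\mathcal{C}}$ totally ordered by inclusion, and a direct check in dual bases shows that any $X \in \gl(V,V_*)$ stabilizing $C$ also stabilizes $C^{\perp\perp}$, so $\St_{\mathcal{C}} = \St_{\tilde{\mathcal{C}}}$. Next I would observe that under the hypothesis on $\mathcal{C}$, no $D \in \mathcal{C}$ sits strictly between a nonclosed $C$ and its closure $C^{\perp\perp}$: a closed $D$ would satisfy $C^{\perp\perp} \subset D^{\perp\perp} = D \subsetneq C^{\perp\perp}$, absurd; while a nonclosed $D$ would, by the hypothesis applied to $D$, have a closed predecessor $D^\flat \in \mathcal{C}$ strictly containing $C$, hence $D^\flat \supset C^{\perp\perp}$, yielding $D^\flat \subsetneq D \subsetneq C^{\perp\perp} \subset D^\flat$, again absurd. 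Applying (1) to $\tilde{\mathcal{C}}$ produces a generalized flag $\mathcal{F}$ with $\St_{\mathcal{F}} = \St_{\mathcal{C}}$, and the preceding observation — traced through the union/intersection construction of $\mathcal{F}$ from a chain — forces every immediate predecessor-successor pair $F' \subsetneq F''$ of $\mathcal{F}$ to satisfy $(F')^{\perp\perp} \in \{F',F''\}$, i.e., $\mathcal{F}$ is semiclosed.

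\textbf{Uniqueness.} Suppose $\mathcal{F}_1 = \{F'_\alpha \subsetneq F''_\alpha\}_{\alpha \in A}$ and $\mathcal{F}_2 = \{G'_\beta \subsetneq G''_\beta\}_{\beta \in B}$ are semiclosed generalized flags sharing the stabilizer $\St$. For $0 \neq v \in V$, let $\alpha(v)$, $\beta(v)$ be the pairs of $\mathcal{F}_1$, $\mathcal{F}_2$ containing $v$ in the upper-minus-lower sense. A rank-one endomorphism $v \otimes \phi$ stabilizes every member of $\mathcal{F}_i$ iff $\phi$ annihilates the largest member not containing $v$, so $(F'_{\alpha(v)})^\perp = (G'_{\beta(v)})^\perp$ and hence $(F'_{\alpha(v)})^{\perp\perp} = (G'_{\beta(v)})^{\perp\perp}$. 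Semiclosedness identifies this common closure as either the predecessor or the successor in each flag; and the criterion ``$v$ lies in the closure'' distinguishes the two cases using only $\St$ and $v$. Partitioning $V \setminus \{0\}$ by the equivalence $(F'_{\alpha(v)})^\perp = (F'_{\alpha(w)})^\perp$ and using the total-order structure to split each class into its (at most two) constituent pairs recovers both flags identically.

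The main obstacle is the semiclosedness verification at the end of the existence step: one must rule out pairs $F' \subsetneq F''$ in $\mathcal{F}$ with $F' \subsetneq (F')^{\perp\perp} \subsetneq F''$. The hypothesis on $\mathcal{C}$ is tailored precisely for this, and the contrapositive argument above encapsulates the idea, but making it precise requires carefully tracing how (1)'s construction assembles immediate predecessor-successor pairs out of the enlarged chain $\tilde{\mathcal{C}}$, where limits of members need not themselves lie in $\tilde{\mathcal{C}}$.
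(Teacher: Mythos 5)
Your overall strategy---adjoin the closures $C^{\perp\perp}$ to $\Ch$, feed the enlarged chain into the construction behind part (1), and get uniqueness from the fact that a semiclosed generalized flag is determined by its stabilizer---matches the paper's, but the step you yourself flag as ``the main obstacle'' is a genuine gap, and it does not close on its own. Applying (1) to $\tilde{\Ch}$ produces, for each nonzero $x$, the pair $F_1(x) := \bigcup_{Y \in \tilde{\Ch},\, x \notin Y} Y \subset F_3(x) := \bigcap_{Y \in \tilde{\Ch},\, x \in Y} Y$, and the resulting generalized flag need \emph{not} be semiclosed: $F_1(x)$ is a union over a possibly infinite chain, and a union of closed subspaces need not be closed, so $F_1(x)^{\perp\perp}$ can sit strictly between $F_1(x)$ and $F_3(x)$ even when every member of $\tilde{\Ch}$ is closed. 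Concretely, let $V$ have basis $\{v,w,v_1,v_2,\dots\}$ and $V_*$ have basis $\{w^*,v_1^*,v_2^*,\dots\}$ with $\langle v_i,v_j^*\rangle=\delta_{ij}$, $\langle w,w^*\rangle=1$, $\langle v,v_j^*\rangle=1$, and all other pairings zero, and take $\Ch$ to consist of the subspaces $\Span\{v_1,\dots,v_n\}$ together with $V$. Every member of $\Ch$ is closed, so the hypothesis holds vacuously and $\tilde{\Ch}=\Ch$, yet $F_1(w)=\Span\{v_1,v_2,\dots\}$ has closure $\Span\{v,v_1,v_2,\dots\}$, strictly between $F_1(w)$ and $F_3(w)=V$. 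Your ``no member of $\Ch$ sits strictly between $C$ and $C^{\perp\perp}$'' observation cannot repair this, because the offending subspace $F_1(x)$ is in general not a member of $\tilde{\Ch}$ at all. The paper's fix is a further refinement: it inserts $F_2(x):=F_1(x)^{\perp\perp}$ into the flag and uses the hypothesis a second time to verify $F_2(x)\subset F_3(x)$ (if $F_3(x)$ is not closed it must belong to $\Ch$, hence be the immediate successor of a closed subspace, which is then forced to equal $F_1(x)=F_2(x)$); after this insertion every immediate predecessor-successor pair has the form $F_1(x)\subset F_2(x)$ or $F_2(x)\subset F_3(x)$, and semiclosedness is immediate.

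Two smaller points. The claim that ``monotonicity of $(\cdot)^{\perp\perp}$ keeps $\tilde{\Ch}$ totally ordered'' is not correct as stated: monotonicity gives $C_1^{\perp\perp}\subset C_2^{\perp\perp}$, whereas total order requires $C_1^{\perp\perp}\subset C_2$, and it is precisely here that the hypothesis is needed (when $C_2$ is not closed, pass through its closed immediate predecessor $C_3$ to get $C_1^{\perp\perp}\subset C_3\subset C_2$); your between-ness argument rules out $C_1\subsetneq C_2\subsetneq C_1^{\perp\perp}$ but not incomparability of $C_1^{\perp\perp}$ and $C_2$. Finally, for uniqueness the paper simply invokes \cite[Proposition 3.8]{DP}; your rank-one computation correctly recovers $(F'_{\alpha(v)})^\perp$ from the stabilizer, which is the right starting point for a direct proof, but the last step (recovering $F'_{\alpha(v)}$ itself in the case $(F'_{\alpha(v)})^{\perp\perp}=F''_{\alpha(v)}$) is only sketched and would need to be filled in.
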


\begin{proof}
We recall the construction from \cite{DimitrovP1} that produces a generalized flag with the same stabilizer as a given set $\Ch$ of subspaces totally ordered by inclusion.  For a fixed nonzero vector $x$ in $V$, consider the subspace $F'(x)$ which is the union of the subspaces in $\Ch$ not containing $x$; it is properly contained in $F''(x)$, the intersection of the subspaces in $\Ch$ containing $x$.  The set of subspaces of the form $F'(x)$ or $F''(x)$ as $x$ runs over the nonzero vectors in $V$ is a generalized flag with the same stabilizer as $\Ch$.

Assume that each nonclosed subspace in $\Ch$ has an immediate predecessor in $\Ch$, and the latter is closed.  Then the set $\Dh := \Ch \cup \{ X^{\perp \perp} \mid X \in \Ch \}$ is totally ordered by inclusion.  To prove this, it suffices to show that $X_1 \subsetneq X_2$ implies $X_1^{\perp \perp} \subset X_2$ for all $X_1$, $X_2 \in \Ch$.  If $X_2$ is closed, then this is clear.  If $X_2$ is not closed, then by assumption $X_2$ has an immediate predecessor $X_3 \in \Ch$, and $X_3$ is closed;  then $X_1 \subset X_3 \subset X_2$, so $X_1^{\perp \perp} \subset X_3 \subset X_2$.

For each nonzero vector $x \in V$, we define 
\begin{align*}
F_1(x) & :=  \bigcup_{Y \in \Dh, \, x \notin Y} Y &
F_2 (x) &:= F_1(x)^{\perp \perp} &
F_3 (x) & :=  \bigcap_{Y \in \Dh, \, x \in Y} Y .
\end{align*}
Applying the general construction from \cite{DimitrovP1} to $\Dh$ yields the generalized flag
$ \left\{ F_1(x), \, F_3(x)  \mid 0 \neq x \in V \right\}$.
We claim that the refinement $$\F := \left\{ F_1(x), \, F_2(x) , \, F_3(x)  \mid 0 \neq x \in V \right\}$$
is a semiclosed generalized flag with the same stabilizer as $\Ch$.

To see that $\F$ is a generalized flag, it suffices to show that $F_2 (x) \subset F_3(x)$ for all nonzero $x \in V$.  If $F_3 (x)$ is closed, then this is clear.  If $F_3 (x)$ is not closed, then $F_3(x) \in \Ch$ (otherwise it would be the intersection of the closed subspaces containing it, since each nonclosed subspace in $\Ch$ is assumed to have an immediate predecessor in $\Ch$ which is closed); hence $F_3(x)$ is the immediate successor in $\Ch$ of a closed subspace, and the latter is then $F_1 (x) = F_2 (x)$.

Now each immediate predecessor-successor pair in $\F$ has the form $F_1 (x) \subset F_2(x)$ or $F_2(x) \subset F_3(x)$ for some nonzero $x \in V$.  In either case the condition defining a semiclosed generalized flag is satisfied.  By construction $\Ch$ and $\F$ have the same stabilizer.  Finally, the uniqueness of $\F$ follows from \cite[Proposition 3.8]{DP}
\end{proof}

We say that semiclosed generalized flags $\F$ in $V$ and $\G$ in $V_*$ form a \emph{taut couple} if $F^\perp$ is stable under $\St_\G$ for all $F \in \F$ and $G^\perp$ is stable under $\St_\F$ for all $G \in \G$.  If $V = V_*$, then a semiclosed generalized flag $\F$ in $V$ is called \emph{self-taut} if $F^\perp$ is stable under $\St_\F$ for all $F \in \F$.  In the interest of clarity, we should emphasize that $\St_\F$ means the $\gl(V,V)$-stabilizer of $\F$ in the case $V=V_*$.

We now summarize Theorem 5.6 in \cite{DP}.  For any taut couple $\F$, $\G$, the subalgebra $\St_\F \cap \St_\G$ is a self-normalizing parabolic subalgebra of $\gl(V,V_*)$.  Moreover, the self-normalizing parabolic subalgebras of $\gl(V,V_*)$ are in bijection with the taut couples in $V$ and $V_*$ \cite[Corollary 5.7]{DP}.  If $\p$ is any parabolic subalgebra of $\gl(V,V_*)$, then the normalizer of $\p$ is a self-normalizing parabolic subalgebra, which we denote $\p_+$; furthermore, $\p$ is defined by trace conditions on $\p_+$.  We call the (unique) taut couple $\F$, $\G$ such that $\p_+ = \St_\F \cap \St_\G$ the \emph{taut couple associated to $\p$}.  The smallest parabolic subalgebra with the associated taut couple $\F$, $\G$ is denoted by $\p_-$, and it is the set of elements of $\St_\F \cap \St_\G$ such that each component in each infinite-dimensional block of a locally reductive part is traceless.

The situation for $\sl(V,V_*)$, $\so(V)$, and $\sp(V)$ is quite similar to the above, in that every parabolic subalgebra is defined by trace conditions on its normalizer, which is a self-normalizing parabolic subalgebra.  Theorem 5.6 in \cite{DP} also characterizes the parabolic subalgebras of $\sl(V,V_*)$, as follows.  The self-normalizing parabolic subalgebras of $\sl(V,V_*)$ are also in bijection with the taut couples in $V$ and $V_*$, where the joint stabilizer $\St_\F \cap \St_\G \cap \sl(V,V_*)$ is the self-normalizing parabolic subalgebra of $\sl(V,V_*)$ corresponding to the taut couple $\F$, $\G$.  The parabolic subalgebras of $\so(V)$ and $\sp(V)$ are described in Theorem 6.6 in \cite{DP}. In the case of $\sp(V)$, taking the stabilizer gives yet again a bijection between the self-taut generalized flags in $V$ and the self-normalizing parabolic subalgebras of $\sp(V)$.  In the case of $\so(V)$, the analogous map surjects onto the self-normalizing parabolic subalgebras of $\so(V)$, but by contrast it is not injective.  The fibers of size different from $1$ are all of size $3$ \cite{DPWolf}.  Note that the claim in \cite[Theorem 6.6]{DP} regarding the uniqueness in the $\so_\infty$ case is erroneous; the correct statement is \cite[Theorem 2.8]{DPWolf}.

\subsection{Locally reductive parts of parabolic subalgebras of $\sl_\infty$ and $\gl_\infty$}

We denote by $C$ the ordered subset  $$C:= \{ \alpha \in A \mid (F'_\alpha)^{\perp \perp} = F'_\alpha \}$$ for any semiclosed generalized flag $\F$ in $V$ with $V \neq V_*$.  For any taut couple $\F$, $\G$ there is a natural bijection between $C$ and the set $\{ \beta \in B \mid (G'_\beta)^{\perp \perp} = G'_\beta \}$, under which $F'_\gamma = (G''_\gamma)^\perp$ for all $\gamma \in C$ \cite[Proposition 3.4]{DP}.  This enables us to consider $C$ as a subset of $B$, as well; note that the inclusion of $C$ into $B$ is order reversing.

In the case $V = V_*$ we denote by $C$ the analogous subset $$C := \{ \alpha \in A \mid (F'_\alpha)^{\perp \perp} = F'_\alpha \textrm{, } F''_\alpha \subset (F''_\alpha)^\perp \}.$$  In this case, there is a natural order-reversing bijection between $C$ and the set $\{ \alpha \in A \mid (F'_\alpha)^{\perp \perp} = F'_\alpha \supset (F'_\alpha)^\perp \}$ \cite[Proposition 6.1]{DP}.  For $\gamma \in C$, we denote by $G'_\gamma \subset G''_\gamma$ the corresponding pair where $G'_\gamma$ is closed and coisotropic, and thus obtain the analogous statements $F'_\gamma = (G''_\gamma)^\perp$ and $G'_\gamma = (F''_\gamma)^\perp$ for all $\gamma \in C$.

The next theorem is slightly more general than Proposition 3.6 (ii) in \cite{DP}.  Note that subspaces $X_\gamma$ and $Y_\gamma$ satisfying the hypotheses necessarily exist \cite[Prop. 3.6 (ii)]{DP}.

\begin{theorem} \label{strongerhypotheses}
Let $\p$ be a parabolic subalgebra of $\sl(V,V_*)$ or $\gl(V,V_*)$, with the associated taut couple $\F$, $\G$. 
Let $X_\gamma \subset V$ and $Y_\gamma \subset V_*$ be any subspaces with $$F''_\gamma = F'_\gamma \oplus X_\gamma \textrm{ and } G''_\gamma = G'_\gamma \oplus Y_\gamma$$ for all $\gamma \in C$, such that
$\langle X_\gamma , Y_\eta \rangle = 0 \textrm{ for } \gamma \neq \eta.$
Then $\p \cap \bigoplus_{\gamma \in C} \gl(X_\gamma, Y_\gamma)$ is a locally reductive part of $\p$.
\end{theorem}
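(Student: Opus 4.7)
The plan is to first establish that $L := \bigoplus_{\gamma \in C} \gl(X_\gamma, Y_\gamma)$ is a direct sum of pairwise commuting reductive subalgebras of the self-normalizing parabolic $\p_+ = \St_\F \cap \St_\G$, then show $L$ is a locally reductive part of $\p_+$ by adapting the proof of Proposition 3.6(ii) of \cite{DP}, and finally descend to $\p$ using the trace conditions.

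For the first task, nondegeneracy of $\langle\cdot,\cdot\rangle|_{X_\gamma\times Y_\gamma}$ follows from the bijection $F'_\gamma = (G''_\gamma)^\perp$, $G'_\gamma = (F''_\gamma)^\perp$: a nonzero $v \in X_\gamma$ cannot lie in $F'_\gamma$ (directness of $F''_\gamma = F'_\gamma \oplus X_\gamma$) and so pairs nontrivially with some $w \in G''_\gamma$, whose $Y_\gamma$-component then witnesses the pairing. Pairwise commutativity and directness of the sum are immediate from the hypothesis $\langle X_\gamma, Y_\eta\rangle = 0$ for $\gamma \ne \eta$. Using $Y_\gamma \subset G''_\gamma \subset (F'_\gamma)^\perp$ and $X_\gamma \subset F''_\gamma \subset (G'_\gamma)^\perp$, together with the formula $\St_\F = \sum_\alpha F''_\alpha \otimes (F'_\alpha)^\perp$ and its analog for $\St_\G$, I obtain $L \subset \p_+$.

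For the second task, I would fix canonical complements $X^0_\gamma$, $Y^0_\gamma$ as furnished by Proposition 3.6(ii) of \cite{DP}, for which $L^0 := \bigoplus_\gamma \gl(X^0_\gamma, Y^0_\gamma)$ is known to be a locally reductive part of $\p_+$. Any admissible $X_\gamma$ is of the form $X_\gamma = \{v + \phi_\gamma(v) : v \in X^0_\gamma\}$ for a unique linear map $\phi_\gamma : X^0_\gamma \to F'_\gamma$, and similarly $Y_\gamma$ is encoded by $\psi_\gamma : Y^0_\gamma \to G'_\gamma$. Expanding $X_\gamma \otimes Y_\gamma$ using these maps expresses each element of $L$ as the sum of a corresponding element of $L^0$ and ``off-diagonal'' correction tensors lying in $F'_\gamma \otimes Y^0_\gamma$, $X^0_\gamma \otimes G'_\gamma$, and $F'_\gamma \otimes G'_\gamma$; a direct check shows that each such tensor is Jordan-nilpotent (the internal pairings forcing squares to vanish) and lies in $\n_{\p_+}$. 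This yields $\p_+ = \n_{\p_+} \subsetplus L$. For the third task, since $\p$ is defined by trace conditions on $\p_+$ and these vanish on $\n_{\p_+}$, they cut out $\p$ inside the complement $L$, giving $\p = \n_{\p_+} \subsetplus (\p \cap L)$; as $\n_\p = \n_{\p_+}$ (trace conditions preserve the nilpotent Jordan parts) and $\p \cap L$ is locally reductive (containing $[L,L] = \bigoplus_\gamma \sl(X_\gamma,Y_\gamma)$ and cut out by linear trace conditions), the theorem follows.

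The main obstacle is the off-diagonal bookkeeping in the second task: one must verify that every correction tensor between $L$ and $L^0$ lies in $\n_{\p_+}$, and in particular that cross-index terms (coming from $\phi_\gamma$ and $\psi_\eta$ with $\gamma \ne \eta$) do not produce semisimple contributions. The orthogonality hypothesis $\langle X_\gamma, Y_\eta\rangle = 0$ for $\gamma \ne \eta$ is exactly what is needed here, and its careful deployment at each step is the reason this assumption, rather than just the individual splittings of $F''_\gamma$ and $G''_\gamma$, appears in the statement.
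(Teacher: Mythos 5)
Your argument is correct, but it takes a different route from the paper's. The paper's proof of Theorem~\ref{strongerhypotheses} is essentially a citation: the authors observe that the proof of Proposition 3.6(ii) in \cite{DP} never actually uses the additional hypotheses imposed there, so it already establishes that $\bigoplus_{\gamma \in C}\gl(X_\gamma,Y_\gamma)$ is a locally reductive part of $\St_\F\cap\St_\G$ for arbitrary admissible complements; they then intersect with $\p$. You instead use the statement of \cite{DP}'s result as a black box for one canonical choice $X_\gamma^0$, $Y_\gamma^0$ and reduce the general case to it by writing $X_\gamma$, $Y_\gamma$ as graphs of maps into $F'_\gamma$, $G'_\gamma$ and absorbing the correction tensors into $\n_{\p_+}$ via $F'_\gamma\otimes G''_\gamma + F''_\gamma\otimes G'_\gamma\subset\n_{\p_+}$. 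This is a legitimate and more self-contained alternative (one need not re-examine the proof in \cite{DP}), and amusingly it is exactly the decomposition $F''_\gamma\otimes G''_\gamma=(F'_\gamma\otimes G''_\gamma+F''_\gamma\otimes G'_\gamma)\oplus X_\gamma\otimes Y_\gamma$ that the authors themselves deploy later, in the proof of Proposition~\ref{weakerhypotheses}. Two small points to tighten: (i) the containment of the correction tensors in $\n_{\p_+}$ needs more than their being square-zero --- by definition $\n_{\p_+}$ consists of Jordan nilpotent elements \emph{of the locally solvable radical}, so you should invoke the explicit description of $\n_{\p_+}$ for a taut couple from \cite{DP} (as the paper implicitly does); (ii) having shown $\n_{\p_+}+L=\p_+$, you should also note that $\n_{\p_+}\cap L=0$, which follows because projection along $F'_\gamma\otimes G''_\gamma+F''_\gamma\otimes G'_\gamma$ maps $X_\gamma\otimes Y_\gamma$ isomorphically onto $X^0_\gamma\otimes Y^0_\gamma$ and $\n_{\p_+}\cap L^0=0$. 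Neither point is a real gap.
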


\begin{proof}
The proof of Proposition 3.6 (ii) in \cite{DP} shows that $\bigoplus_{\gamma \in C} \gl(X_\gamma, Y_\gamma)$ is a locally reductive part of $\St_\F \cap \St_\G$.  Note that there are additional hypotheses in \cite{DP} which were not used in the proof.  Intersecting a locally reductive part of $\St_\F \cap \St_\G$ with $\p$ imposes the same trace conditions defining $\p$, yielding a locally reductive part of $\p$.
\end{proof}

\section{Parabolic subalgebras of $\sl_\infty$ and $\gl_\infty$ with given Levi component}

In this section we prove the two main theorems of this paper.  Theorem~\ref{firstcharacterization} identifies the subalgebras of $\sl(V,V_*)$ and $\gl(V,V_*)$ which can be realized as the Levi component of a parabolic subalgebra.  Theorem~\ref{iffgl} characterizes all parabolic subalgebras of which a given subalgebra $\l$ is a Levi component.

Every parabolic subalgebra of $\sl(V,V_*)$ or $\gl(V,V_*)$ has a Levi component of the form $\bigoplus_{i \in I} \sl_{n_i}$ for some $n_i \in \Z_{\geq 2} \cup \{ \infty \}$, by Theorem~\ref{strongerhypotheses}.  We therefore consider whether every such subalgebra of $\sl(V,V_*)$ or $\gl(V,V_*)$ is a Levi component of some parabolic subalgebra.  An obstruction presents itself immediately, in consequence of the following lemma.

\begin{lemma} \label{notdiagonal}
Let $\p$ be a parabolic subalgebra of $\gl(V,V_*)$, and $\l$ a Levi component of $\p$.  Then $\l$ is a direct sum of standard special linear subalgebras.  

Furthermore, the order of the generalized flag $\F$ in $V$ associated to $\p$ induces an order on the simple direct summands of $\l$.
\end{lemma}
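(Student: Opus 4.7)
The plan is to identify one Levi component explicitly via Theorem~\ref{strongerhypotheses} and then argue that every Levi component has the same shape.

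First I would apply Theorem~\ref{strongerhypotheses} to fix one locally reductive part
$$\q_{red} \;=\; \p \cap \bigoplus_{\gamma \in C} \gl(X_\gamma, Y_\gamma)$$
for some admissible choice of complementary subspaces $X_\gamma \subset V$ and $Y_\gamma \subset V_*$. Because $\p$ is obtained from $\p_+$ by trace conditions, those conditions can only constrain the scalar (center) parts of the diagonal blocks inside the ambient $\bigoplus_{\gamma} \gl(X_\gamma, Y_\gamma)$, so passing to the commutator subalgebra erases them and yields
$$[\q_{red},\q_{red}] \;=\; \bigoplus_{\gamma \in C} \sl(X_\gamma, Y_\gamma).$$
Each summand $\sl(X_\gamma,Y_\gamma)$ is a standard special linear subalgebra by construction. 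A direct comparison with the definitions shows that $[\q_{red},\q_{red}]$ sits inside $[\p,\p]$ as a complement to $\r \cap [\p,\p]$, hence is a Levi component of $\p$.

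Second, for an arbitrary Levi component $\l$ of $\p$, I would invoke the conjugacy of Levi components in this setting under inner automorphisms generated by $\ad n$ with $n \in \n_\p$ (the locally finite analogue of Levi--Malcev, which can be argued by a direct-limit passage to the finite-dimensional theorem applied to an exhaustion $\p = \bigcup \p_k$). Any such $n$ stabilizes $\F$ and acts trivially on each quotient $F''_\alpha/F'_\alpha$, so the automorphism $\exp(\ad n)$ sends the pair $(X_\gamma,Y_\gamma)$ to a new pair $(X'_\gamma,Y'_\gamma)$ still satisfying the hypotheses of Theorem~\ref{strongerhypotheses}; in particular it sends $\sl(X_\gamma,Y_\gamma)$ to the standard special linear subalgebra $\sl(X'_\gamma,Y'_\gamma)$. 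Thus $\l = \bigoplus_{\gamma \in C} \sl(X'_\gamma, Y'_\gamma)$ for some valid choice of complements, which proves the first assertion.

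For the final assertion, the order is essentially built in. The index set $A$ of immediate predecessor-successor pairs of $\F$ carries the total order coming from the inclusion order on $\F$, the subset $C \subset A$ inherits this order, and the bijection $\gamma \mapsto \sl(X_\gamma,Y_\gamma)$ transports the order to the set of simple direct summands of $\l$; independence of this order from the choice of $(X_\gamma,Y_\gamma)$ is immediate because the bijection between summands and elements of $C$ is canonical.

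The main obstacle is the second step: one must either import a conjugacy-of-Levi-components statement for parabolic subalgebras of $\gl_\infty$, or argue it from scratch via the finite-dimensional case and the structure provided by Theorem~\ref{strongerhypotheses}, and one must verify that the relevant inner automorphisms preserve the class of standard special linear subalgebras. Once this is in place, the rest is bookkeeping.
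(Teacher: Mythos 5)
The first half of your argument (using Theorem~\ref{strongerhypotheses} to exhibit one Levi component of the form $\bigoplus_{\gamma}\sl(X_\gamma,Y_\gamma)$, each summand standard) is sound, but everything then rests on the second step, and that step has a genuine gap. There is no Levi--Malcev conjugacy theorem available in this setting, and the precise statement you invoke is false: the linear nilradical $\n_\p$ consists of finite-rank operators, so for $n\in\n_\p$ the operator $e^n-1$ has finite rank, and since the subspaces $X_\gamma$ for $\gamma\in D$ are linearly independent, $e^n$ can move only finitely many of them. On the other hand, Proposition~\ref{weakerhypotheses} (which does not depend on this lemma) shows that \emph{every} admissible choice of complements $X_\gamma$, $Y_\gamma$ yields a Levi component of $\p$; when $D$ is infinite one can alter the complement in every block, producing two Levi components that are not conjugate under any $\exp(\ad n)$ with $n\in\n_\p$. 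The proposed direct-limit argument does not repair this: finite-dimensional Levi--Malcev applied to an exhaustion $\p=\bigcup_k\p_k$ produces conjugating elements $n_k$ with no compatibility as $k$ grows, and conjugacy statements of this kind are exactly what tends to fail for locally finite Lie algebras (as with Cartan and Borel subalgebras of $\gl_\infty$). More fundamentally, even with some conjugacy in hand you would first need to know that an arbitrary Levi component --- a priori just some locally semisimple complement to $\r\cap[\p,\p]$ in $[\p,\p]$ --- lies in the orbit of the standard one, which is essentially the assertion to be proved.

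The paper's proof avoids conjugacy altogether. A Levi component is a \emph{maximal} locally semisimple subalgebra of $\p$ by \cite[Theorem 4.3]{DP}, hence a direct sum of simple subalgebras; each simple summand $\s$ is shown, via finite-dimensional representation theory or the classification of simple subalgebras of $\gl(V,V_*)$ in \cite{DimitrovP2}, to lie in a sum $\sl(X_1,Y_1)\oplus\cdots\oplus\sl(X_k,Y_k)$ of standard special linear subalgebras which one checks stabilizes $\F$ and $\G$ and therefore lies in $\p$; maximality then forces $\s=\sl(X_1,Y_1)$ with $k=1$. The order on the summands likewise comes from an \emph{injection} $\kappa:I\rightarrow D$ extracted by a further maximality argument, not from a block-by-block bijection --- at this point in the paper one does not yet know that every $\gamma\in D$ is hit, which is only established later in Theorem~\ref{iffgl}. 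If you want to salvage your strategy, you would need to replace the conjugacy step by a direct argument of this maximality type.
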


\begin{proof}
 The Levi component $\l$ of $\p$ is a maximal locally semisimple subalgebra of $\p$ by \cite[Theorem 4.3]{DP}.
Since $\l$ is a locally semisimple subalgebra of $\g$, it is a direct sum of simple subalgebras \cite{DimitrovP2}.  Let $\s$ denote one of the simple direct summands of $\l$, and take $\l_0$ to be the direct sum of all the other simple direct summands of $\l$, so $\l = \s \oplus \l_0$.
We will show that $\s$ is a standard special linear subalgebra.

When $\s$ is finite dimensional, there exist nontrivial simple $\s$-submodules $X_1$, $X_2$, \ldots, $X_k$ of $V$ and $Y_1$, $Y_2$, \ldots, $Y_k$ of $V_*$ such that $\langle X_i , Y_j \rangle = 0$ for $i \neq j$ and $$\s \subset \sl(X_1 , Y_1) \oplus \sl(X_2, Y_2) \oplus \cdots \oplus \sl(X_k, Y_k).$$  When $\s$ is infinite dimensional, the same statement follows from \cite{DimitrovP2}, where the authors characterize arbitrary subalgebras of $\gl(V,V_*)$ isomorphic to $\sl_\infty$, $\so_\infty$, and $\sp_\infty$.

We will show that $\sl(X_1 , Y_1) \oplus \sl(X_2, Y_2) \oplus \cdots \oplus \sl(X_k, Y_k) \subset \p$.  As the labeling is arbitrary, it is enough to show that $\sl(X_1, Y_1) \subset \p$.  Moreover, it suffices to show that $\F$ is stable under $\sl(X_1, Y_1)$, where $\F$, $\G$ is the taut couple associated to $\p$.  Indeed, it then follows by symmetry that $\G$ is also stable under $\sl(X_1, Y_1)$, and hence $\sl(X_1, Y_1) \subset [ \St_\F \cap \St_\G , \St_\F \cap \St_\G ] \subset \p$.

Fix a nonzero vector $x_i \in X_i$ for $i = 1, \ldots, k$.  By the definition of a generalized flag $x_i \in F''_{\alpha_i} \setminus F'_{\alpha_i}$ for some $\alpha_i \in A$.  Consider that $X_i =\s \cdot x_i \subset \St_\F \cdot x_i \subset F''_{\alpha_i}$.
If there were a nonzero vector in the intersection $X_i \cap F'_{\alpha_i}$, then one would have similarly that $X_i \subset F''_{\beta_i}$ for some $\beta_i < \alpha_i$, contradicting the fact that $x_i \notin F'_{\alpha_i}$.  Thus we conclude $X_i \cap F'_{\alpha_i} = 0$, and we have shown $F'_{\alpha_i} \oplus X_i \subset F''_{\alpha_i}$.  

Observe that $\s \cdot F'_{\alpha_1} \subset \St_\F \cdot F'_{\alpha_1} \subset F'_{\alpha_1}$.
Let $\pi_i$ denote the $i$th projection for the decomposition $\sl(X_1 , Y_1) \oplus \sl(X_2, Y_2) \oplus \cdots \oplus \sl(X_k, Y_k)$.  One has $\pi_i(\s) \cdot F'_{\alpha_1} \subset ( X_i \otimes Y_i ) \cdot V \subset X_i$ for each $i$, and hence $\pi_1 (\s) \cdot F'_{\alpha_1} = 0$.  Since $(Y_1)^\perp$ is the largest trivial $\pi_1(\s)$-submodule of $V$, we see that $F'_{\alpha_1} \subset (Y_1)^\perp$.  As a result
\begin{equation*}\sl(X_1 , Y_1) \subset X_1 \otimes Y_1 \subset F''_{\alpha_1} \otimes (F'_{\alpha_1})^\perp \subset \St_\F. 
\end{equation*}

Therefore $\sl(X_1 , Y_1) \oplus \sl(X_2, Y_2) \oplus \cdots \oplus \sl(X_k, Y_k) \oplus \l_0$ is a locally semisimple subalgebra containing $\l$ and contained in $\p$.  By the maximality of $\l$, we obtain $\s = \sl(X_1 , Y_1) \oplus \sl(X_2, Y_2) \oplus \cdots \oplus \sl(X_k, Y_k)$ and in particular $k = 1$, so $\s$ is a standard special linear subalgebra.

This shows that $\l = \bigoplus_{i \in I} \sl (X_i , Y_i)$ for some subspaces $X_i \subset V$ and $Y_i \subset V_*$.  Fix $i \in I$.  As shown above, there exists $\alpha_i \in A$ such that $F'_{\alpha_i} \oplus X_i \subset F''_{\alpha_i}$ and $\langle F'_{\alpha_i} , Y_i \rangle = 0$.  Similarly, there exists $\beta_i \in B$ such that $G'_{\beta_i} \oplus Y_i \subset G''_{\beta_i}$ and $\langle X_i , G'_{\beta_i} \rangle = 0$.  Since $\F$, $\G$ form a taut couple, it follows that $F'_{\alpha_i} = (G''_{\beta_i})^\perp$ and $G'_{\beta_i} = (F''_{\alpha_i})^\perp$, and hence $\alpha_i \in D$.  Thus the rule $i \mapsto \alpha_i$ gives a well-defined map $\kappa : I \rightarrow D$.  We claim that $\kappa$ is an injective map.  To see this, suppose $\kappa (j) = \kappa (k)$ for some $j, k \in I$.  Then $\l \subset \sl (X_j + X_k  , Y_j + Y_k) \oplus \bigoplus_{i \neq j,k} \s_i \subset \p$, so the maximality of $\l$ implies that $j = k$.  Since $D$ is an ordered set, there is an induced order on $I$, the set of direct summands of $\l$.
\end{proof}

Theorem~\ref{firstcharacterization} below shows that there are no further obstructions to finding a parabolic subalgebra such that a given subalgebra is a Levi component.  We first prove a lemma and a proposition.

\begin{lemma} \label{tripleperp}
Fix subspaces $X \subset V$ and $Y \subset V_*$ such that the restriction $\langle \cdot , \cdot \rangle |_{X \times Y}$ is nondegenerate.  Let $T \subset V$, and define $U := ((T + X)^\perp \oplus Y)^\perp$.  Then $$U = ((U \oplus X)^\perp \oplus Y)^\perp.$$
\end{lemma}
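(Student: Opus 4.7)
The plan is to rewrite both $U$ and the right-hand side in a transparent form, using the identities $(A+B)^\perp = A^\perp \cap B^\perp$ and $(F^\perp)^\perp = F^{\perp\perp}$ (the Mackey closure of $F$), and then reduce the claim to a simple monotonicity statement about closure.

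First I would note that the nondegeneracy of $\langle \cdot , \cdot \rangle|_{X \times Y}$ forces $X \cap Y^\perp = 0$ and $Y \cap X^\perp = 0$, so the sums $(T+X)^\perp + Y$ and $(U+X)^\perp + Y$ are automatically direct, as are $T + X$ and $U + X$ in every place where I will write them. This justifies the notation in the statement and lets me work with ordinary sums throughout the computation.

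Next I would rewrite
\[
U \;=\; ((T+X)^\perp \oplus Y)^\perp \;=\; (T+X)^{\perp\perp} \cap Y^\perp,
\]
and analogously
\[
((U \oplus X)^\perp \oplus Y)^\perp \;=\; (U+X)^{\perp\perp} \cap Y^\perp.
\]
So the lemma becomes the assertion $(T+X)^{\perp\perp} \cap Y^\perp = (U+X)^{\perp\perp} \cap Y^\perp$.

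The inclusion $\subset$ is immediate: by definition $U \subset Y^\perp$, and $U \subset U+X \subset (U+X)^{\perp\perp}$. For the reverse inclusion, since both sides are intersected with $Y^\perp$, it suffices to show $(U+X)^{\perp\perp} \subset (T+X)^{\perp\perp}$, which in turn follows once $U+X \subset (T+X)^{\perp\perp}$ is established (because $(T+X)^{\perp\perp}$ is already closed, so its closure equals itself). But $U \subset (T+X)^{\perp\perp}$ by the very definition of $U$, and $X \subset T+X \subset (T+X)^{\perp\perp}$, giving the desired containment.

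I do not foresee a real obstacle: the argument is purely formal manipulation of $(\cdot)^\perp$, the distributivity $(A+B)^\perp = A^\perp \cap B^\perp$, and the fact that $F^{\perp\perp\perp}=F^\perp$. The only point requiring a moment's care is checking that the direct sums in the displayed formulas are genuine, which is exactly where the nondegeneracy hypothesis on $X$ and $Y$ is used.
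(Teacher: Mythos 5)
Your proof is correct and follows essentially the same route as the paper's: both rewrite $U$ as $(T+X)^{\perp\perp}\cap Y^\perp$, observe that one inclusion is then immediate, and obtain the other from the containment $U+X\subset (T+X)^{\perp\perp}$ (the paper phrases this as $(T+X)^\perp\subset (U\oplus X)^\perp$, which is the same thing). One inessential slip in your preamble: the sum $T+X$ need not be direct (take $T=V$), but your argument never uses that, since $T+X$ only ever appears with a plain $+$.
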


\begin{proof}
To see that $U \subset ((U \oplus X)^\perp \oplus Y)^\perp = (U \oplus X)^{\perp \perp} \cap Y^\perp$, consider that $U = ((T + X)^\perp \oplus Y)^\perp = (T + X)^{\perp \perp} \cap Y^\perp \subset Y^\perp$, while the inclusion $U \subset (U + X)^{\perp \perp}$ is automatic.  For the reverse containment, we observe that 
\begin{align*}
\langle ((T + X)^\perp \oplus Y)^\perp \oplus X , (T + X)^\perp \rangle &  =  \langle (T + X)^{\perp \perp} \cap Y^\perp \oplus X , (T + X)^\perp \rangle \\
& \subset \langle (T + X)^{\perp \perp}, (T + X)^\perp \rangle = 0.
\end{align*}
This shows that $(T + X)^\perp \subset (((T + X)^\perp \oplus Y)^\perp \oplus X)^\perp$.  Hence 
$$((((T + X)^\perp \oplus Y)^\perp \oplus X)^\perp \oplus Y)^\perp \subset ((T + X)^\perp \oplus Y)^\perp,$$
i.e.\ $((U \oplus X)^\perp \oplus Y)^\perp \subset U$.
\end{proof}

For any semiclosed generalized flag $\F$ we set 
$$D := \{ \gamma \in C \mid \dim F''_\gamma / F'_\gamma > 1 \}.$$  
Note that $D = \{ \gamma \in C \mid \dim G''_\gamma / G'_\gamma > 1 \}$, as the pairing $V \times V_* \rightarrow \C$ induces a nondegenerate pairing of $F''_\gamma / F'_\gamma$ and $G''_\gamma / G'_\gamma$ for all $\gamma \in C$.

\begin{prop} \label{weakerhypotheses}
Let $\p$ be a parabolic subalgebra of $\sl(V,V_*)$ or $\gl(V,V_*)$, with the associated taut couple $\F$, $\G$. 
Let $X_\gamma \subset V$ and $Y_\gamma \subset V_*$ be any subspaces with $$F''_\gamma = F'_\gamma \oplus X_\gamma \textrm{ and } G''_\gamma = G'_\gamma \oplus Y_\gamma$$ for all $\gamma \in D$, such that
$\langle X_\gamma , Y_\eta \rangle = 0 \textrm{ for } \gamma \neq \eta.$
Then $\bigoplus_{\gamma \in D} \sl(X_\gamma, Y_\gamma)$ is a Levi component of $\p$.
\end{prop}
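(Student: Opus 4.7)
The plan is to reduce this proposition to Theorem~\ref{strongerhypotheses} by extending the given family $\{X_\gamma, Y_\gamma\}_{\gamma \in D}$ to a family $\{\tilde X_\gamma, \tilde Y_\gamma\}_{\gamma \in C}$ satisfying the stronger orthogonality condition of that theorem, with $\tilde X_\gamma = X_\gamma$ and $\tilde Y_\gamma = Y_\gamma$ for $\gamma \in D$. For $\gamma \in C \setminus D$, both quotients $F''_\gamma/F'_\gamma$ and $G''_\gamma/G'_\gamma$ are one-dimensional, so $\tilde X_\gamma$ and $\tilde Y_\gamma$ are forced to be one-dimensional complements, each only determined up to a shift in $F'_\gamma$ or $G'_\gamma$.

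First I would observe that many of the required orthogonality relations $\langle \tilde X_\gamma, \tilde Y_\eta \rangle = 0$ hold automatically. Using the identity $F'_\gamma = (G''_\gamma)^\perp$ together with its dual $G'_\gamma = (F''_\gamma)^\perp$ (from the taut-couple correspondence at the closed level) and the fact that the natural embedding $C \hookrightarrow B$ is order-reversing, one checks that whenever $\gamma < \eta$ in the order on $C$, any complement satisfies $\tilde Y_\eta \subset G''_\eta \subset G'_\gamma = (F''_\gamma)^\perp$, and hence $\langle \tilde X_\gamma, \tilde Y_\eta \rangle = 0$ without further restriction. In particular, orthogonality between $\tilde X_\gamma$ and $\tilde Y_{\gamma'}$ for any $\gamma, \gamma' \in C \setminus D$ is automatic, so the individual choices for distinct indices in $C \setminus D$ decouple. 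The only nontrivial conditions are that $\tilde X_\gamma \perp Y_\eta$ for $\eta \in D$ with $\eta < \gamma$, and the symmetric condition $\tilde Y_\gamma \perp X_\eta$ for $\eta \in D$ with $\eta > \gamma$.

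The main obstacle is verifying, for each $\gamma \in C \setminus D$ individually, that a one-dimensional complement $\tilde X_\gamma \subset F''_\gamma$ to $F'_\gamma$ can be chosen inside $(\sum_{\eta \in D,\, \eta < \gamma} Y_\eta)^\perp$, and symmetrically for $\tilde Y_\gamma$. This is precisely where Lemma~\ref{tripleperp} is used: applied with $T$, $X$, and $Y$ chosen to encode the already-constrained data, its stabilization identity $U = ((U \oplus X)^\perp \oplus Y)^\perp$ guarantees that the relevant closure-intersection does not collapse into $F'_\gamma$, producing the required complement.

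Once the extended family $\{\tilde X_\gamma, \tilde Y_\gamma\}_{\gamma \in C}$ is in hand, Theorem~\ref{strongerhypotheses} furnishes a locally reductive part $\q_{red} := \p \cap \bigoplus_{\gamma \in C} \gl(\tilde X_\gamma, \tilde Y_\gamma)$ of $\p$. Since $\sl(\tilde X_\gamma, \tilde Y_\gamma) = \sl_1 = 0$ for $\gamma \in C \setminus D$, taking commutators yields $[\q_{red}, \q_{red}] = \bigoplus_{\gamma \in D} \sl(X_\gamma, Y_\gamma)$. It then remains to invoke the general fact that the commutator subalgebra of any locally reductive part of $\p$ is a Levi component of $\p$; this follows routinely from $\p = \n_\p \subsetplus \q_{red}$, the ideal property of $\n_\p$, and the decomposition $\q_{red} = [\q_{red}, \q_{red}] \oplus \z(\q_{red})$ characteristic of locally reductive Lie algebras, which together give $[\p,\p] = (\r \cap [\p,\p]) \subsetplus [\q_{red}, \q_{red}]$.
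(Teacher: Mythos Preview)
Your overall strategy---extend the given complements on $D$ to a family on all of $C$ satisfying the hypotheses of Theorem~\ref{strongerhypotheses}, then read off the Levi component as the derived subalgebra of the resulting locally reductive part---is plausible but different from the paper's, and your execution has a genuine gap.

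The error is in the decoupling claim. You correctly observe that $\gamma < \eta$ forces $\tilde Y_\eta \subset G''_\eta \subset G'_\gamma = (F''_\gamma)^\perp$, so $\langle \tilde X_\gamma, \tilde Y_\eta \rangle = 0$ automatically. But this is only one direction: for $\gamma > \eta$ the pairing $\langle \tilde X_\gamma, \tilde Y_\eta \rangle$ is \emph{not} automatically zero, even when both indices lie in $C \setminus D$. So the choices for distinct indices in $C \setminus D$ do \emph{not} decouple; choosing $\tilde X_\gamma$ imposes constraints on $\tilde Y_\eta$ for every $\eta < \gamma$ in $C \setminus D$, and there may be infinitely many such $\eta$. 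Your appeal to Lemma~\ref{tripleperp} at this point is also not a proof: that lemma gives a stabilization identity $U = ((U \oplus X)^\perp \oplus Y)^\perp$ for a specific closure construction, and you have not explained how it produces a vector in $F''_\gamma \setminus F'_\gamma$ orthogonal to the required (possibly infinite) collection of $Y_\eta$'s. Whether the given family on $D$ can always be extended to $C$ is not obvious, and in any case you have not shown it.

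The paper sidesteps the extension problem entirely. It takes an \emph{arbitrary} family $\{\tilde X_\gamma, \tilde Y_\gamma\}_{\gamma \in C}$ satisfying Theorem~\ref{strongerhypotheses} (such a family exists by \cite[Prop.~3.6(ii)]{DP}), with no requirement that $\tilde X_\gamma = X_\gamma$ on $D$. The key observation is then purely algebraic: for each $\gamma \in D$, the decomposition $F''_\gamma \otimes G''_\gamma = (F'_\gamma \otimes G''_\gamma + F''_\gamma \otimes G'_\gamma) \oplus X_\gamma \otimes Y_\gamma$ together with $F'_\gamma \otimes G''_\gamma + F''_\gamma \otimes G'_\gamma \subset \n_\p$ shows that $\gl(\tilde X_\gamma, \tilde Y_\gamma)$ and $\gl(X_\gamma, Y_\gamma)$ coincide modulo $\n_\p$. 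Hence $\n_\p \subsetplus \bigoplus_{\gamma \in D} \gl(\tilde X_\gamma, \tilde Y_\gamma) = \n_\p \subsetplus \bigoplus_{\gamma \in D} \gl(X_\gamma, Y_\gamma)$, and since this subalgebra is defined by trace conditions on $\St_\F \cap \St_\G$ (as is $\p$), \cite[Prop.~4.9]{DP} finishes the argument. This avoids any delicate construction of complements.
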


\begin{proof}
Let $X_\gamma$ and $Y_\gamma$ for $\gamma \in D$ be as in the statement, and let $\tilde{X}_\gamma$ and $\tilde{Y}_\gamma$ for $\gamma \in C$ be as in Theorem~\ref{strongerhypotheses}, so that $\St_\F \cap \St_\G = \n_\p \subsetplus \bigoplus_{\gamma \in C} \gl (\tilde{X}_\gamma , \tilde{Y}_\gamma)$.  Clearly the subalgebra $\n_\p \subsetplus \bigoplus_{\gamma \in D}  \gl (\tilde{X}_\gamma , \tilde{Y}_\gamma)$ is defined by trace conditions on $\St_\F \cap \St_\G$.  
A subalgebra has the same set of Levi components as any subalgebra defined by trace conditions on it, by \cite[Proposition 4.9]{DP}.  Since $\p$ is also defined by trace conditions on $\St_\F \cap \St_\G$, all three have the same set of Levi components, and it suffices to show that $\bigoplus_{\gamma \in D} \sl(X_\gamma, Y_\gamma)$ is a Levi component of  $\n_\p \subsetplus \bigoplus_{\gamma \in D}  \gl (\tilde{X}_\gamma , \tilde{Y}_\gamma)$.

Clearly $\bigoplus_{\gamma \in D} \sl(X_\gamma, Y_\gamma)$ is a Levi component of  $\n_\p \subsetplus \bigoplus_{\gamma \in D}  \gl (X_\gamma, Y_\gamma)$.  We claim that $$\n_\p \subsetplus \bigoplus_{\gamma \in D}  \gl (\tilde{X}_\gamma , \tilde{Y}_\gamma) = \n_\p \subsetplus \bigoplus_{\gamma \in D}  \gl (X_\gamma , Y_\gamma).$$
To see this, consider that for each $\gamma \in C$,
\begin{align*}
\tilde{X}_\gamma \otimes \tilde{Y}_\gamma \subset F''_\gamma \otimes G''_\gamma
& = (F'_\gamma \oplus X_\gamma) \otimes (G'_\gamma \oplus Y_\gamma) \\
& = (F'_\gamma \otimes G''_\gamma + F''_\gamma \otimes G'_\gamma) \oplus X_\gamma \otimes Y_\gamma.
\end{align*}
Since $F'_\gamma \otimes G''_\gamma + F''_\gamma \otimes G'_\gamma \subset \n_\p$, we have shown that
$\gl (\tilde{X}_\gamma , \tilde{Y}_\gamma) \subset \n_\p \subsetplus \gl (X_\gamma , Y_\gamma)$.  One has symmetrically that $\gl (X_\gamma , Y_\gamma) \subset \n_\p \subsetplus \gl (\tilde{X}_\gamma , \tilde{Y}_\gamma)$ for each $\gamma \in C$.
\end{proof}

Let $\g$ denote either $\sl(V,V_*)$ or $\gl(V,V_*)$.

\begin{theorem} \label{firstcharacterization}
Let $\l$ be a subalgebra of $\g$.  There exists a parabolic subalgebra $\p$ of $\g$ such that $\l$ is a Levi component of $\p$ if and only if $\l$ is a direct sum of standard special linear subalgebras of $\g$.  

Moreover, given a subalgebra $\l$ for which such parabolic subalgebras exist, one exists that induces an arbitrary order on the simple direct summands of $\l$ (see Lemma~\ref{notdiagonal}).
\end{theorem}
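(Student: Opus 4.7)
The forward direction is Lemma~\ref{notdiagonal}. For the converse together with the moreover assertion, assume $\l = \bigoplus_{i \in I} \sl(X_i, Y_i)$ is a direct sum of standard special linear subalgebras of $\g$. The commutativity of the summands forces $\langle X_i, Y_j \rangle = 0$ for $i \neq j$, since otherwise the commutator $[x_i \otimes y_i, x_j \otimes y_j] = \langle x_j, y_i \rangle x_i \otimes y_j - \langle x_i, y_j \rangle x_j \otimes y_i$ would be nonzero for suitable $x_i \in X_i$, $y_i \in Y_i$, $x_j \in X_j$, $y_j \in Y_j$. Fix any total order $<$ on $I$ realizing the prescribed order on the simple summands of $\l$.

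For each $i \in I$, I would define
\[
F'_i := \Bigl( \bigoplus_{j \geq i} Y_j \Bigr)^\perp \subset V, \qquad F''_i := F'_i \oplus X_i,
\]
and symmetrically
\[
G'_i := \Bigl( \bigoplus_{j \leq i} X_j \Bigr)^\perp \subset V_*, \qquad G''_i := G'_i \oplus Y_i,
\]
with the direct sums valid by the nondegeneracy of $\langle \cdot, \cdot \rangle|_{X_i \times Y_i}$. The established orthogonality yields the inclusions $F'_i \subsetneq F''_i \subseteq F'_j \subsetneq F''_j$ in $V$ for $i < j$, and the reversed chain in $V_*$, so both $\Ch := \{F'_i, F''_i\}_{i \in I}$ and $\Dh := \{G'_i, G''_i\}_{i \in I}$ are totally ordered by inclusion. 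Each $F'_i, G'_i$ is closed (being an annihilator) and is the immediate predecessor in its chain of $F''_i, G''_i$ respectively. Proposition~\ref{genf}(\ref{uniquesemic}) therefore produces unique semiclosed generalized flags $\F$ in $V$ and $\G$ in $V_*$ with the same stabilizers as $\Ch$ and $\Dh$; the fill-in pairs introduced in that proposition have one-dimensional successor quotient and contribute nothing to $D$.

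The perpendicularity identities $F'_i = (G''_i)^\perp$ and $G'_i = (F''_i)^\perp$ follow by direct computation from the definitions. These, combined with the $\St_\G$-stability of every member of $\G$ and its symmetric counterpart, verify that $(\F, \G)$ is a taut couple and give an order-preserving bijection between $I$ and the set $D$ associated to $\F$, with $\dim F''_i / F'_i = \dim X_i \geq 2$. Setting $\p := \St_\F \cap \St_\G$, intersected with $\sl(V, V_*)$ in the $\sl$-case, Theorem 5.6 of \cite{DP} ensures that $\p$ is a self-normalizing parabolic subalgebra of $\g$ with associated taut couple $(\F, \G)$. Proposition~\ref{weakerhypotheses}, applied to $\p$ with the given $X_i, Y_i$, then exhibits $\bigoplus_{i \in I} \sl(X_i, Y_i) = \l$ as a Levi component of $\p$, and the order on the simple summands induced by $\F$ is the chosen $<$ by construction.

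The main technical obstacle, I expect, is the verification of the perpendicularity identities and the associated taut-couple condition. Since $(\bigoplus_k X_k)^{\perp\perp}$ need not equal $\bigoplus_k X_k$ when this direct sum is infinite, and similarly for $\bigoplus_j Y_j$ in $V_*$, one must argue using the behaviour of the pairing on the complements of $\bigoplus_i X_i$ in $V$ and of $\bigoplus_j Y_j$ in $V_*$ to confirm the claimed identities; it may be convenient to enlarge the chains by adjoining $(\bigoplus_i Y_i)^\perp$ and its analogue in $V_*$ before invoking Proposition~\ref{genf}(\ref{uniquesemic}), so that no additional pairs of $D$-type are spuriously introduced.
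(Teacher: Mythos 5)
The overall strategy (build a taut couple whose $D$-pairs are complemented by the given $X_i$, $Y_i$, then invoke Proposition~\ref{weakerhypotheses}) is the paper's, but your construction of the taut couple has two genuine gaps. First, the perpendicularity identities you assert are false for your choice of subspaces. You take the largest admissible predecessor on each side, $F'_i = (\bigoplus_{j\ge i}Y_j)^\perp$ and $G'_i = (\bigoplus_{j\le i}X_j)^\perp$, but tautness forces $G'_i = (F''_i)^\perp$: once $F'_i$ is fixed, its partner is determined, and choosing the largest $F'_i$ forces the \emph{smallest} admissible $G'_i$, not the largest. Concretely, $(G''_i)^\perp = (\bigoplus_{j\le i}X_j)^{\perp\perp}\cap Y_i^\perp$ is always contained in, but in general strictly smaller than, $F'_i = \bigcap_{j\ge i}Y_j^\perp$. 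For a one-summand counterexample take dual bases $\{v_i\}_{i\ge 0}$ of $V$ and $\{v_i^*\}_{i\ge 0}$ of $V_*$ with $X = \Span\{v_i \mid i\ge 1\}$ and $Y = \Span\{v_i^* \mid i\ge 1\}$: then $F'_1 = Y^\perp = \C v_0$ while $(G''_1)^\perp = (X^\perp\oplus Y)^\perp = (V_*)^\perp = 0$, and the flags $0\subset\C v_0\subset V$ and $0\subset\C v_0^*\subset V_*$ do not form a taut couple. The paper avoids this by setting $U_i := ((\bigoplus_{k\le i}X_k)^\perp\oplus Y_i)^\perp$, proving the matched identity $U_i = ((U_i\oplus X_i)^\perp\oplus Y_i)^\perp$ via Lemma~\ref{tripleperp}, and taking the $\G$-side pairs to be $(U_i\oplus X_i)^\perp\subset(U_i\oplus X_i)^\perp\oplus Y_i$ rather than prescribing them independently.

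Second, even with a correct taut couple, Proposition~\ref{weakerhypotheses} produces $\bigoplus_{\gamma\in D}\sl(X_\gamma,Y_\gamma)$ as the Levi component, so you must ensure that $D$ is indexed exactly by $I$. Your claim that the fill-in pairs produced by Proposition~\ref{genf} ``have one-dimensional successor quotient and contribute nothing to $D$'' is unjustified and generally false: applying Proposition~\ref{genf} to the bare chain produces, in each gap, a pair of closed subspaces of the form $(F''_i)^{\perp\perp}\subset F'_{i+1}$, which lies in $C$ and lies in $D$ whenever its quotient has dimension at least $2$; in that case every Levi component of $\St_\F\cap\St_\G$ acquires an extra special linear summand and $\l$ itself is not a Levi component. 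Adjoining $(\bigoplus_i Y_i)^\perp$ to the chain does not cure this. The paper's fix is to pass from the minimal taut couple to one \emph{maximal} among taut couples retaining $U_i\subset U_i\oplus X_i$ (and their partners) as immediate predecessor-successor pairs; maximality forces every other closed pair to have one-dimensional quotient, whence $D$ is in order-preserving bijection with $I$. Without both repairs the construction does not produce a parabolic subalgebra having $\l$ as a Levi component.
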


\begin{proof}
In this and subsequent proofs, we assume (without loss of generality) that $\g = \gl(V,V_*)$.  The only if direction was proved in Lemma~\ref{notdiagonal}.  

Conversely, fix commuting standard special linear subalgebras $\s_i \subset \gl(V,V_*)$ for $i \in I$, as well as an order on $I$.  We will construct a parabolic subalgebra $\p$ such that $\l := \bigoplus_{i \in I} \s_i$ is a Levi component of $\p$, and $\p$ induces the given order on $I$.  Each standard special linear subalgebra $\s_i$ determines subspaces $X_i \subset V$ and $Y_i \subset V_*$ such that $\s_i = \sl(X_i , Y_i)$.  As these direct summands commute, it must be that $\langle X_i , Y_j \rangle = 0$ for $i \neq j$.

For each $i$, we define $$U_i := (( \bigoplus_{k \leq i} X_k)^\perp \oplus Y_i)^\perp.$$
One may check in an elementary fashion that $U_i \oplus X_i \subset U_j$ for all $i < j$.  Since $U_j$ is closed for all $j \in I$, we have moreover that $U_i \oplus X_i \subset (U_i \oplus X_i)^{\perp \perp} \subset U_j$ for all $i < j$.  Furthermore, for each $i \in I$, an application of Lemma~\ref{tripleperp} using $T = \bigoplus_{k<i} X_k$ shows that
$U_i = ((U_i \oplus X_i)^\perp \oplus Y_i)^\perp.$

We claim that there is a unique semiclosed generalized flag $\F_0$ in $V$ with the same stabilizer as the set $\{ U_i , \, U_i \oplus X_i \mid i \in I \}$.  This follows from Proposition~\ref{genf} (\ref{uniquesemic}).
Similarly, there is a unique semiclosed generalized flag in $V_*$ with the same stabilizer as the set
$$\{ (U_i \oplus X_i)^\perp , \, (U_i \oplus X_i)^\perp \oplus Y_i \mid i \in I \}.$$
One may check that $\F_0$, $\G_0$ form a taut couple using the identity $U_i = ((U_i \oplus X_i)^\perp \oplus Y_i)^\perp$.
Indeed, $\F_0$, $\G_0$ is the minimal taut couple with immediate predecessor-successor pairs $U_i \subset U_i \oplus X_i$ in $\F$ and $(U_i \oplus X_i)^\perp \subset (U_i \oplus X_i)^\perp \oplus Y_i$ in $\G$ for all $i \in I$.

Let $\F$, $\G$ be maximal among the taut couples having immediate predecessor-successor pairs $U_i \subset U_i \oplus X_i$ in $\F$ and $(U_i \oplus X_i)^\perp \subset (U_i \oplus X_i)^\perp \oplus Y_i$ in $\G$ for all $i \in I$.  Then there is a natural bijection between $I$ and $D$.  By \cite[Theorem 5.6]{DP}, $\St_\F \cap \St_\G$ is a parabolic subalgebra of $\gl(V,V_*)$.  Moreover, it follows from Proposition~\ref{weakerhypotheses} that $\l$ is a Levi component of $\St_\F \cap \St_\G$.  By construction, the induced order on the simple direct summands of $\l$ is the given order on $I$.
\end{proof}

We now characterize the parabolic subalgebras of which a given subalgebra is a Levi component.

\begin{theorem} \label{iffgl}
Let $\p$ be a parabolic subalgebra of $\g$, with the associated taut couple $\F$, $\G$.  Then $\l$ is a Levi component of $\p$ if and only if
there exist subspaces $X_\gamma \subset V$ and $Y_\gamma \subset V_*$ with 
\begin{equation} \label{comps}
F''_\gamma = F'_\gamma \oplus X_\gamma  \textrm{ and }  G''_\gamma = G'_\gamma \oplus Y_\gamma
\end{equation}
for all $\gamma \in D$, such that 
$\l = \bigoplus_{\gamma \in D} \sl(X_\gamma, Y_\gamma)$.
\end{theorem}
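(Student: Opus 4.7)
The plan is to derive both implications from Proposition~\ref{weakerhypotheses} combined with Lemma~\ref{notdiagonal}. For the ``if'' direction, given the decompositions $F''_\gamma = F'_\gamma \oplus X_\gamma$ and $G''_\gamma = G'_\gamma \oplus Y_\gamma$, one must first deduce the pairing condition $\langle X_\gamma, Y_\eta\rangle = 0$ for $\gamma \neq \eta$ in $D$, since it is required to invoke Proposition~\ref{weakerhypotheses} but is absent from the theorem's hypotheses. Given distinct $\gamma, \eta \in D$, by the total ordering of $\F$ we may assume $F''_\gamma \subseteq F'_\eta$; the order-reversing bijection on $C$ from \cite[Proposition 3.4]{DP} then yields $G''_\eta \subseteq G'_\gamma$. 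Using the identity $G'_\gamma = (F''_\gamma)^\perp$ (the symmetric counterpart of $F'_\gamma = (G''_\gamma)^\perp$, obtained by applying the cited proposition with the roles of $\F$ and $\G$ exchanged), we conclude $Y_\eta \subset G''_\eta \subseteq G'_\gamma = (F''_\gamma)^\perp$, and hence $\langle X_\gamma, Y_\eta\rangle \subset \langle F''_\gamma, (F''_\gamma)^\perp\rangle = 0$. Proposition~\ref{weakerhypotheses} now directly gives that $\bigoplus_{\gamma \in D} \sl(X_\gamma, Y_\gamma)$ is a Levi component of $\p$.

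For the ``only if'' direction, suppose $\l$ is a Levi component of $\p$. Lemma~\ref{notdiagonal} supplies a decomposition $\l = \bigoplus_{i \in I} \sl(X_i, Y_i)$ together with an injection $\kappa : I \hookrightarrow D$ satisfying $F'_{\kappa(i)} \oplus X_i \subseteq F''_{\kappa(i)}$ and $G'_{\kappa(i)} \oplus Y_i \subseteq G''_{\kappa(i)}$ for each $i$. I would then extend each $X_i$ to a vector-space complement $\tilde X_{\kappa(i)}$ of $F'_{\kappa(i)}$ in $F''_{\kappa(i)}$, extend each $Y_i$ similarly to $\tilde Y_{\kappa(i)}$, and, for $\gamma \in D \setminus \kappa(I)$, fix arbitrary complements $\tilde X_\gamma$ and $\tilde Y_\gamma$ of $F'_\gamma$ in $F''_\gamma$ and of $G'_\gamma$ in $G''_\gamma$, respectively. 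Applying the ``if'' direction (already established) to this data produces a Levi component $\l' := \bigoplus_{\gamma \in D} \sl(\tilde X_\gamma, \tilde Y_\gamma)$ of $\p$, and by construction $\l \subseteq \l'$.

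To finish, one uses that a Levi component is a maximal locally semisimple subalgebra of $\p$ by \cite[Theorem 4.3]{DP}, so the inclusion $\l \subseteq \l'$ must be an equality. Comparing simple summands, this equality forces $\kappa$ to be surjective (otherwise $\sl(\tilde X_\gamma, \tilde Y_\gamma)$ for any $\gamma \in D \setminus \kappa(I)$ would be a nonzero simple summand of $\l'$ not present in $\l$; it is nonzero precisely because $\dim F''_\gamma/F'_\gamma > 1$ by definition of $D$), and also forces $X_i = \tilde X_{\kappa(i)}$ and $Y_i = \tilde Y_{\kappa(i)}$ for every $i$. Setting $X_\gamma := \tilde X_\gamma$ and $Y_\gamma := \tilde Y_\gamma$ then yields the required presentation. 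The main obstacle in this route is the orthogonality verification at the outset of the ``if'' direction, since it requires exploiting both halves of the taut couple bijection; once that is dispatched, the rest is a clean extension-and-maximality argument.
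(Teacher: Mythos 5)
Your overall architecture is reasonable, but there is a genuine gap, and it sits exactly where you locate ``the main obstacle'': the orthogonality $\langle X_\gamma, Y_\eta\rangle = 0$ for $\gamma\neq\eta$ does \emph{not} follow from the complement conditions \eqref{comps} alone. Your argument, assuming $F''_\gamma\subseteq F'_\eta$, correctly gives $Y_\eta\subset G''_\eta\subseteq G'_\gamma=(F''_\gamma)^\perp$ and hence $\langle X_\gamma,Y_\eta\rangle=0$; but this is only one of the two relations needed for the pair $\{\gamma,\eta\}$, and the other, $\langle X_\eta,Y_\gamma\rangle=0$, can fail. For instance, let $\F$ be $0\subset F\subset V$ with $F$ closed, $\dim F\geq 2$ and $\dim V/F\geq 2$, so that $\G$ is $V_*\supset F^\perp\supset 0$ and $D$ consists of the two pairs $\gamma=(0,F)$ and $\eta=(F,V)$. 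Then $Y_\gamma$ is an arbitrary complement of $F^\perp$ in $V_*$ and $X_\eta$ is an arbitrary complement of $F$ in $V$; starting from a pair with $\langle X_\eta,Y_\gamma\rangle=0$ and replacing a basis vector $x$ of $X_\eta$ by $x+f$ with $f\in F$, $\langle f,Y_\gamma\rangle\neq 0$ (such $f$ exists since $Y_\gamma\not\subset F^\perp$), one still has a complement but $\langle X_\eta,Y_\gamma\rangle\neq 0$. The true source of the orthogonality in the ``if'' direction is not the flag: it is the hypothesis that the \emph{subalgebra} $\l$ equals $\bigoplus_{\gamma\in D}\sl(X_\gamma,Y_\gamma)$, so the summands are commuting ideals, and commutation forces both $\langle X_\gamma,Y_\eta\rangle=0$ and $\langle X_\eta,Y_\gamma\rangle=0$ (exactly as in the proof of Theorem~\ref{firstcharacterization}). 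With that observation the ``if'' direction is just Proposition~\ref{weakerhypotheses}, as in the paper.

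The same misconception undermines your ``only if'' direction. You extend $X_i$, $Y_i$ to arbitrary complements $\tilde X_\gamma$, $\tilde Y_\gamma$ and invoke the ``if'' direction for this data; but Proposition~\ref{weakerhypotheses} requires $\langle\tilde X_\gamma,\tilde Y_\eta\rangle=0$ for $\gamma\neq\eta$, which by the above is not automatic for arbitrary complements. You would need to prove that complements with this orthogonality can be chosen so as to \emph{contain} the prescribed $X_i$ and $Y_i$ — a nontrivial claim that is not addressed and comes uncomfortably close to what the theorem asserts. The paper's route avoids this: it takes the $\tilde X_\gamma,\tilde Y_\gamma$ of Proposition~\ref{weakerhypotheses} with no containment constraint (existence is \cite[Prop.~3.6(ii)]{DP}), uses that $\n_\p\subsetplus\l'=\n_\p+[\p,\p]=\n_\p\subsetplus\l$ for any two Levi components $\l$, $\l'$ of $\p$, and computes $F'_d+\bigl(\n_\p\subsetplus\l\bigr)\cdot v$ for $v\in\tilde X_d$, which yields simultaneously that $\kappa$ is onto $D$ and that $F''_d=F'_d\oplus X_{\kappa^{-1}(d)}$. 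Your maximality endgame (minimal ideals match, and $X_i=\sl(X_i,Y_i)\cdot V$ recovers the subspaces) is fine, but to keep the extension-plus-maximality strategy you must first justify the existence of orthogonal extending complements or replace that step by the nilradical computation.
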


\begin{proof}
One direction is Proposition~\ref{weakerhypotheses}.  Conversely, assume $\l$ is an arbitrary Levi component of $\p$.  By Theorem~\ref{firstcharacterization}, we have $\l = \bigoplus_{i \in I} \s_i$ for some standard special linear subalgebras $\s_i \subset \gl(V,V_*)$.  Hence there exist subspaces $X_i \subset V$ and $Y_i \subset V_*$ such that $\s_i = \sl(X_i, Y_i)$; again, $\langle X_i , Y_j \rangle = 0$ for $i \neq j$.
As shown in the proof of Lemma~\ref{notdiagonal}, there exists an injective map $\kappa : I \rightarrow D$, with the properties $F'_{\kappa(i)} \oplus X_i \subset F''_{\kappa(i)}$ and $\langle F'_{\kappa(i)} , Y_i \rangle = 0$. 

Let  $\tilde{X}_\gamma$ and $\tilde{Y}_\gamma$ for $\gamma \in D$ be as in the statement of Proposition~\ref{weakerhypotheses}.  Then since the span of the linear nilradical of $\p$ and any Levi component equals $\n_\p + [\p,\p]$ \cite{DP}, we have
$$\n_\p \subsetplus \bigoplus_{\gamma \in D} \sl(\tilde{X}_\gamma , \tilde{Y}_\gamma) = \n_\p \subsetplus \bigoplus_{i \in I} \sl(X_i, Y_i).$$

Let $d \in D$ be arbitrary.  Fix $v \in  \tilde{X}_d$.  Then
$$F'_d  + \big( \n_\p \subsetplus \bigoplus_{\gamma \in D} \sl(\tilde{X}_\gamma , \tilde{Y}_\gamma) \big) \cdot v  =  F'_d + \tilde{X}_d = F''_d,$$
since $\n_\p \cdot v \subset F'_d$.  So
\begin{align*}
F''_d  = F'_d + \big( \n_\p \subsetplus  \bigoplus_{i \in I} \sl(X_i, Y_i) \big) \cdot v 
& = F'_d + \bigoplus_{i \in I} \langle v , Y_i \rangle X_i \\
& = \begin{cases}
F'_d \oplus X_i & \textrm{if } d = \kappa (i) \textrm{ for some } i \in I\\
F'_d &  \textrm{if } \kappa (i) \neq d \textrm{ for all } i \in I,
\end{cases}
\end{align*}
since $\kappa (i) < d$ implies $X_i \subset F'_d$, while $\kappa (i) > d$ implies $\langle v , Y_i \rangle = 0$.  As $F'_d \subsetneq F''_d$, we conclude that $d = \kappa (i)$ for some $i \in I$.  Hence $\kappa$ is a bijection from $I$ to $D$.  Since we have shown that $F''_d = F'_d \oplus X_{\kappa^{-1} (d)}$ for all $d \in D$, we are done.
\end{proof}

Here is an example notably different from the finite-dimensional case.  Let $V$ and $V_*$ be vector spaces with bases $\{ v \} \cup \{ v_ i \mid i \in \Z_{> 0} \}$ and $\{ v_ i^* \mid i \in \Z_{> 0} \}$, pairing according to the rules
$\langle v_i , v_j^* \rangle = \delta_{ij}$ and  $$\langle v, v_j^* \rangle = 1 \textrm{ for all j.}$$
We will find all self-normalizing parabolic subalgebras of $\gl(V,V_*)$ with Levi component $\sl(X_1 , Y_1) \oplus \sl(X_2 , Y_2)$, where 
\begin{align*}
X_1 & :=  \Span \{ v_{2i-1} \mid i \in \Z_{>0} \} & Y_1 & :=  \Span \{ v_{2i-1}^* \mid i \in \Z_{>0} \} \\
 X_2 & :=   \Span \{ v_{2i} \mid i \in \Z_{>0} \}  & Y_2 & :=  \Span \{ v_{2i}^* \mid i \in \Z_{>0} \}.
\end{align*}
By the above theorem, this is equivalent to finding all taut couples $\F$, $\G$ so that the given subspaces provide vector space complements for the pairs in $D$.

Since $Y_1 \oplus Y_2 = V_*$, the semiclosed generalized flag $\G$ in $V_*$ must be either
$0 \subset Y_1 \subset V_*$ or  $0 \subset Y_2 \subset V_*$.  Then $\F$ must be a refinement of the generalized flag $\{ G^\perp \mid G \in \G \}$; that is, $\F$ is a refinement of $0 \subset X_2 \subset V$ or  $0 \subset X_1 \subset V$.  In either case, it is necessary to insert $X_1 \oplus X_2$ into $\F$ in order to have the given subspaces $X_1$ and $X_2$ be vector space complements for the pairs in $D$.  Thus the following is a complete list of the taut couples as desired:
\begin{align*}
0  \subset X_2  & \subset  X_1 \oplus X_2 \subset V \\
V_*  \supset Y_1 & \supset  0  
\end{align*}
and 
\begin{align*}
0 \subset X_1  & \subset  X_1 \oplus X_2 \subset V \\
V_* \supset Y_2 & \supset  0.  
\end{align*}
Note that the subspace $X_1 \oplus X_2$ appearing in both of the above taut couples has codimension $1$ in $V$; nevertheless $(X_1 \oplus X_2)^{\perp \perp} = V$.
  
Let $\l := \bigoplus_{i \in I} \s_i$ for some commuting standard special linear subalgebras of $\g$.  By definition $\s_i = \sl(X_i , Y_i)$ for some subspaces $X_i \subset V$ and $Y_i \subset V_*$.  The maximal trivial $\l$-submodule of $V$ is $(\bigoplus_{i \in I} Y_i)^\perp$, and $\l \cdot V = \bigoplus_{i \in I} X_i$.  Therefore the socle of $V$ as an $\l$-module (that is, the direct sum of all simple $\l$-submodules of $V$) is 
$$\bigoplus_{i \in I} X_i \oplus (\bigoplus_{i \in I} Y_i)^\perp,$$ and each nontrivial simple module in the socle of $V$ has multiplicity $1$.  This shows that each subspace $X_i$ for $i \in I$ is determined by $\l$, and one can recover similarly the subspaces $Y_i$ as the nontrivial simple submodules of $V_*$. 
This enables us to strengthen the above theorem as follows.  Again, let $\p$ be a parabolic subalgebra of $\g$, with the associated taut couple $\F$, $\G$.  The map which takes the subspaces $X_\gamma$, $Y_\gamma$ to the subalgebra $\bigoplus_{\gamma \in D} \sl(X_\gamma, Y_\gamma)$ is a bijection from the sets of subspaces $X_\gamma \subset V$ and $Y_\gamma \subset V_*$ for $\gamma \in D$ such that \eqref{comps} holds and
 $\langle X_\gamma , Y_\eta \rangle = 0 \textrm{ for } \gamma \neq \eta$ to the Levi components of $\p$.

Yet another restatement of Theorem~\ref{iffgl} is in order.  Let $\p$ be a parabolic subalgebra of $\g$, with the associated taut couple $\F$, $\G$.  Then $\l \subset \p$ is a Levi component of $\p$ if and only if the following conditions hold:
\begin{itemize}
\item The $\l$-modules $F''_\gamma / F'_\gamma$ and $G''_\gamma / G'_\gamma$ are simple for all $\gamma \in D$;
\item $\l \cong \bigoplus_i \sl_{n_i}$ for some $n_i \in  \Z_{\geq 2} \cup \{ \infty \}$;
\item  There is a unique nontrivial simple $\s$-submodule of $V$ for each simple direct summand $\s$ of $\l$;
\item For each finite-dimensional simple direct summand $\s$ of $\l$, the nontrivial simple $\s$-submodule of $V$ is isomorphic to the natural or conatural $\s$-module.
\end{itemize}
 The last condition is automatic for the infinite-dimensional simple direct summands of $\l$, as shown in \cite{DimitrovP2}.

\begin{cor}
Not every maximal locally semisimple subalgebra of a finitary Lie algebra is a Levi component. 
\end{cor}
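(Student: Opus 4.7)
The plan is to exhibit a parabolic subalgebra $\p$ of $\gl(V,V_*)$ together with a locally semisimple subalgebra $\l\subset\p$ that is maximal among locally semisimple subalgebras of $\p$ but is not of the form required by Theorem~\ref{iffgl}. Since $\p$, being a subalgebra of $\gl_\infty$, is itself finitary, any such pair $(\p,\l)$ yields the corollary.

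By the restatement of Theorem~\ref{iffgl} given just before the corollary, $\l$ fails to be a Levi component of $\p$ as soon as one of the four listed necessary conditions is violated. Two especially convenient targets for a counterexample are: (a) arrange $\l$ to contain a finite-dimensional simple direct summand $\s$ whose nontrivial simple submodule of $V$ is not isomorphic to the natural or conatural $\s$-module (for instance, an $\sl_2$ acting on a copy of the adjoint representation sitting inside $V$), or (b) arrange the taut couple $(\F,\G)$ associated to $\p$ so that some quotient $F''_\gamma/F'_\gamma$ with $\gamma\in D$ decomposes nontrivially as an $\l$-module. Either violation immediately rules $\l$ out as a Levi component of $\p$.

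The main obstacle is verifying the maximality of such an $\l$: a naive choice will typically sit inside a strictly larger locally semisimple subalgebra of $\p$ (often inside a genuine Levi component, obtained from $\l$ either by enlarging one of its simple direct summands to a full standard special linear subalgebra or by adjoining extra summands indexed by the remaining elements of $D$), contradicting the intended maximality. To force maximality one must engineer $\p$ so that the centralizer of $\l$ in $\p$ contains no further nontrivial semisimple elements, and so that any attempt to adjoin a simple locally semisimple subalgebra to $\l$ forces either a Jordan-nilpotent element into $\l$ or pushes some resulting generator outside the stabilizer $\St_\F\cap\St_\G$. This requires a delicate choice of the taut couple $(\F,\G)$; verifying maximality rigorously, ruling out even exotic enlargements by diagonally or otherwise non-standardly embedded simple subalgebras, is where the genuine content of the proof lies, and the corollary is established by producing any one explicit such configuration.
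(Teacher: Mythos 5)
There is a genuine gap: your proposal is a plan for a proof, not a proof. You correctly identify the right strategy --- exhibit a parabolic subalgebra $\p$ of $\gl(V,V_*)$ and a subalgebra $\l\subset\p$ that is maximal locally semisimple in $\p$ yet violates the criterion of Theorem~\ref{iffgl} --- and you correctly locate the hard part in the verification of maximality. But you then stop exactly there: no explicit $\p$, no explicit $\l$, and no maximality argument are ever produced. Since the corollary is an existence statement, ``the corollary is established by producing any one explicit such configuration'' is precisely the step that cannot be omitted. As written, nothing has been proved.

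For comparison, the paper's proof executes this plan concretely. With $V$, $V_*$ having dual bases indexed by $\Z$, it takes $\p$ to be the stabilizer of $X_1=\Span\{v_1,v_2,\dots\}$ and $\l=\sl(X_1,Y_1)\oplus\sl(X_2,Y_2)$ with $X_1\oplus X_2$ of codimension one in $V$ (so $X_2$ fails to complement $X_1$ in $V$, which is the violation of Theorem~\ref{iffgl} --- note this is a third mechanism, distinct from your suggested targets (a) and (b), though it can be read as an instance of the non-simplicity of $F''_\gamma/F'_\gamma$). The maximality check, which you flag as requiring one to rule out ``exotic enlargements by diagonally or otherwise non-standardly embedded simple subalgebras,'' is in fact handled by the observation that Lemma~\ref{notdiagonal} applies verbatim to maximal locally semisimple subalgebras of $\p$, not only to Levi components: any enlargement $\k\supset\l$ is therefore forced to be a direct sum of standard special linear subalgebras $\sl(\tilde X_i,\tilde Y_i)$ with $X_i\subset\tilde X_i$, $Y_i\subset\tilde Y_i$, and a short computation with perpendicular spaces pins these down to $X_i$, $Y_i$ themselves (the only candidate enlargement $\sl(X_1\oplus\C v_0,\,Y_1\oplus\C v_0^*)$ fails to stabilize $X_1$). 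Without this structural input and an explicit configuration on which to run it, your argument does not get off the ground.
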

 
 \begin{proof}
We will show that the finitary Lie algebra in question can be chosen to be a parabolic subalgebra of $\gl_\infty$.

Let $V$ and $V_*$ be vector spaces with
 bases $\{ \ldots, v_{-2}, v_{-1}, v_0, v_1, v_2, \ldots \}$ and  $\{ \ldots, v^*_{-2}, v^*_{-1}, v^*_0, v^*_1, v^*_2, \ldots \}$, respectively, and let the pairing be such that these are dual bases. 
  Let
\begin{align*}
X_1 & :=  \Span \{v_1, v_2, v_3, \ldots \}
& Y_1 & :=  \Span \{ v_0^* + v_1*, v_0^* + v_2^*, v_0^* + v_3^* , \ldots \}  
 \\
X_2 & :=  \Span \{v_{-1} , v_{-2}, v_{3}, \ldots \} 
& Y_2 & :=  \Span \{v_{-1}^* , v_{-2}^*, v_{3}^*, \ldots \}.
\end{align*}
Let $\p$ be the stabilizer of $X_1$.  Then $\p$ is the self-normalizing parabolic subalgebra of $\gl(V,V_*)$ corresponding to the taut couple
\begin{align*}
0 & \subset X_1 \subset V \\
V_* & \supset  (X_1)^\perp \supset  0.
\end{align*}  
 Theorem~\ref{iffgl} gives that  $\sl(X_1,Y_1) \oplus \sl(X_2,Y_2)$ is not a Levi component of $\p$, since $$X_1 \oplus X_2 \subsetneq V.$$ 
  We claim that $\sl(X_1,Y_1) \oplus \sl(X_2,Y_2)$ is nevertheless a maximal locally semisimple subalgebra of $\p$. 
 
 To see this, let $\k$ be any maximal locally semisimple subalgebra of $\p$ containing $\sl(X_1, Y_1) \oplus \sl(X_2,Y_2)$.  As one may check from the proof, Lemma~\ref{notdiagonal} can be applied not just to Levi components but also to maximal locally semisimple subalgebras of $\p$, and it implies that $\k$ is a direct sum of standard special linear subalgebras.
Thus $\k$ must have direct summands $\k_1$ and $\k_2$ with $\sl(X_i,Y_i) \subset \k_i = \sl (\tilde{X_i}, \tilde{Y_i})$ for some subspaces $\tilde{X_i} \subset V$ and $\tilde{Y_i} \subset V_*$.  Then one has necessarily that $X_i \subset \tilde{X_i}$ and $Y_i \subset \tilde{Y_i}$.  Note that $\k_1$ and $\k_2$ must be distinct, as $\sl(X_1 \oplus X_2, Y_1 \oplus Y_2)$ is not contained in $\p$.  Hence $[\k_1 , \k_2] = 0$, which implies that $\langle \tilde{X_1}, \tilde{Y_2} \rangle = \langle \tilde{X_2}, \tilde{Y_1} \rangle = 0$.

Consider that $X_2 \subset \tilde{X_2} \subset (Y_1)^\perp = X_2$.  Hence $\tilde{X_2} = X_2$.  Now $$Y_2 \subset \tilde{Y_2} \subset (X_1)^\perp = Y_2 \oplus \C v_0^*.$$  Since the restriction of the pairing on $V \times V_*$ to $\tilde{X_2} \times \tilde{Y_2}$ is nondegenerate, we conclude that $\tilde{Y_2} = Y_2$; that is, $\k_2 = \sl(X_2,Y_2)$.

Similarly, one sees that $X_1 \subsetneq \tilde{X_1}$ implies $\tilde{X_1} = (Y_2)^\perp = X_1 \oplus \C v_0$, while $Y_1 \subsetneq \tilde{Y_1}$ implies $\tilde{Y_1} = (X_2)^\perp = Y_1 \oplus \C v_0^*$.  The only larger potential direct summand $\k_1$ to consider based on nondegeneracy considerations is $\sl (X_1 \oplus \C v_0, Y_1 \oplus \C v_0^*)$, but this does not stabilize $X_1$.  Thus $\k = \sl(X_1,Y_1) \oplus \sl(X_2,Y_2)$, and we have shown that $\sl(X_1,Y_1) \oplus \sl(X_2,Y_2)$ is a maximal locally semisimple subalgebra of $\p$. 
\end{proof}
 
\section{Some further corollaries} \label{corollaries}

We continue to take $\g$ to be $\sl(V,V_*)$ or $\gl(V,V_*)$.  Of the corollaries we present to Theorem~\ref{iffgl}, the first two in particular are useful when computing explicitly all parabolic subalgebras with a given Levi component.

\begin{cor} \label{useful}
Fix a subalgebra $\l = \bigoplus_{i \in I} \sl(X_i,Y_i)$, where $X_i \subset V$ and $Y_i \subset V_*$.  Assume that $\dim X_i \geq 2$ for all $i \in I$, and that $I$ is an ordered set.  Let $U_i \subset V$ be subspaces such that $U_i \oplus X_i \subset U_j$ for all $i < j$ and
 $$U_i = ((U_i \oplus X_i)^\perp \oplus Y_i)^\perp$$ 
 for each $i \in I$. 
 
Let $\F$ be a semiclosed generalized flag maximal among the semiclosed generalized flags in $V$ in which $U_i \subset U_i \oplus X_i$ is an immediate predecessor-successor pair for all $i \in I$. Then there is a unique semiclosed generalized flag $\G$ in $V_*$ such that $\F$, $\G$ form a taut couple and $\l$ is a Levi component of the self-normalizing parabolic subalgebra $\St_\F \cap \St_\G$.
\end{cor}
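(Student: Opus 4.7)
The plan is to build $\G$ directly from $\F$ and the given subspaces, verify it works via Theorem~\ref{iffgl}, and derive uniqueness from the uniqueness of the taut couple associated to a self-normalizing parabolic subalgebra. For the construction, form the set
$$\Ch := \{ F^\perp \mid F \in \F\} \cup \{ (U_i \oplus X_i)^\perp \oplus Y_i \mid i \in I\} \subset V_*$$
and show it is totally ordered by inclusion. The $F^\perp$'s are totally ordered since $\F$ is. Since the direct summands of $\l$ commute, $\langle X_i, Y_j \rangle = 0$ for $i \neq j$, and the hypothesis $U_j = ((U_j \oplus X_j)^\perp \oplus Y_j)^\perp \subset Y_j^\perp$ combined with $U_i \oplus X_i \subset U_j$ for $i < j$ gives $\langle U_i, Y_j \rangle = 0$ for $i < j$; consequently $(U_j \oplus X_j)^\perp \oplus Y_j \subset (U_i \oplus X_i)^\perp$ for $i < j$. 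To compare $F^\perp$ with $(U_i \oplus X_i)^\perp \oplus Y_i$ for $F \in \F$, use that $(U_i, U_i \oplus X_i)$ is an immediate predecessor-successor pair: either $F \subset U_i$, giving $F^\perp \supset U_i^\perp \supset (U_i \oplus X_i)^\perp \oplus Y_i$, or $F \supset U_i \oplus X_i$, giving $F^\perp \subset (U_i \oplus X_i)^\perp$. Every non-closed element of $\Ch$ has the form $(U_i \oplus X_i)^\perp \oplus Y_i$ with closed immediate predecessor $(U_i \oplus X_i)^\perp \in \Ch$, so Proposition~\ref{genf}(\ref{uniquesemic}) produces a unique semiclosed generalized flag $\G$ in $V_*$ with the same stabilizer as $\Ch$.

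To verify that $(\F, \G)$ is a taut couple: each $F^\perp$ lies in $\Ch$, so it is stable under $\St_\G = \St_\Ch$; in the reverse direction, each element of $\Ch$ has perpendicular equal to an element of $\F$ (namely $F^{\perp\perp}$ or $U_i$), which is $\St_\F$-stable, and using the explicit construction of $\G$ from $\Ch$ in Proposition~\ref{genf} one deduces that $G^\perp$ is $\St_\F$-stable for every $G \in \G$. The maximality of $\F$ implies that every immediate predecessor-successor pair in $\F$ not of the form $(U_i, U_i \oplus X_i)$ either has non-closed bottom or codimension one, so $D$ is naturally in bijection with $I$. Under the canonical identification with the analogous subset on the $\G$-side, each pair becomes $((U_i \oplus X_i)^\perp, (U_i \oplus X_i)^\perp \oplus Y_i)$, and the required complementary decompositions, together with $\langle X_i, Y_j \rangle = 0$ for $i \neq j$, hold by construction; Theorem~\ref{iffgl} then yields that $\l$ is a Levi component of $\St_\F \cap \St_\G$.

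For uniqueness, suppose $\G'$ is another such semiclosed generalized flag. Theorem~\ref{iffgl} forces the $D$-pairs in $\G'$ to be exactly $\{((U_i \oplus X_i)^\perp, (U_i \oplus X_i)^\perp \oplus Y_i)\}_{i \in I}$. Any further subspace of $\G'$ has perpendicular in $V$ that is $\St_\F$-stable, and the explicit formula $\St_\F = \sum_\alpha F''_\alpha \otimes (F'_\alpha)^\perp$ combined with the maximality of $\F$ implies that this perpendicular must already belong to $\F$, so every subspace in $\G'$ lies in $\Ch$. Consequently $\St_{\G'} = \St_\Ch = \St_\G$, and the uniqueness part of Proposition~\ref{genf}(\ref{uniquesemic}) yields $\G' = \G$. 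The main obstacle is this last step: ruling out additional $\St_\F$-stable closed subspaces in $\G'$ requires a delicate use of maximality, since such subspaces could a priori exist compatibly with the semiclosed-flag structure unless one exploits the precise form of the prescribed immediate predecessor-successor pairs.
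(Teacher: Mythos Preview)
Your approach is essentially the paper's: you build the same chain $\Ch = \{F^\perp \mid F \in \F\} \cup \{(U_i \oplus X_i)^\perp \oplus Y_i \mid i \in I\}$, invoke Proposition~\ref{genf}(\ref{uniquesemic}) to obtain $\G$, and apply Theorem~\ref{iffgl}. The verification that $\Ch$ is totally ordered and that $(U_i \oplus X_i)^\perp$ is the immediate predecessor of $(U_i \oplus X_i)^\perp \oplus Y_i$ matches the paper's, as does the use of the identity $U_i = ((U_i \oplus X_i)^\perp \oplus Y_i)^\perp$.

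The one substantive difference is in the uniqueness argument. The paper handles uniqueness \emph{first}, by invoking \cite[Proposition~3.3]{DP}: in any taut couple $\F$, $\G'$, every closed subspace of $\G'$ is a union of subspaces $F^\perp$ with $F \in \F$. Combined with the $D$-pairs forced by Theorem~\ref{iffgl}, this immediately gives that any such $\G'$ has the same stabilizer as $\Ch$, hence equals $\G$ by Proposition~\ref{genf}(\ref{uniquesemic}). You instead argue that extra closed subspaces of $\G'$ have $\St_\F$-stable perpendiculars which, by maximality of $\F$, must already lie in $\F$. This works (the $D$-pairs in $\G'$ guarantee that such perpendiculars cannot fall strictly between $U_i$ and $U_i \oplus X_i$, so inserting them would genuinely refine $\F$), but it is the ``delicate'' step you yourself flag, and it essentially reproves a special case of \cite[Proposition~3.3]{DP}. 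Citing that result directly, as the paper does, is both shorter and avoids the need to justify comparability of the new perpendicular with all of $\F$.
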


\begin{proof}
Let $\F$ be maximal among the semiclosed generalized flags in $V$ with immediate predecessor-successor pairs $U_i \subset U_i \oplus X_i$ for all $i \in I$.  Let $\G$ be any semiclosed generalized flag such that $\F$, $\G$ form a taut couple, and $\l$ is a Levi component of $\St_\F \cap \St_\G$.
By Theorem~\ref{iffgl},
for each $i \in I$  there is an immediate predecessor-successor pair $(U_i \oplus X_i)^\perp \subset (U_i \oplus X_i)^\perp \oplus Y_i$ in $\G$.   
By \cite[Proposition 3.3]{DP} each closed subspace in $\G$ is the union of some set of subspaces of the form $F^\perp$ for $F \in \F$, and each closed subspace in $\F$ is the union of some set of subspaces of the form $G^\perp$ for $G \in \G$.  Therefore $\G$ if it exists must have the same stabilizer as the set
\begin{equation*}
\{ F^\perp \mid F \in \F \} \cup \{ (U_i \oplus X_i)^\perp \oplus Y_i \mid i \in I \}.
\end{equation*}

We show that the above set is totally ordered by inclusion, and $(U_i \oplus X_i)^\perp$ is the immediate predecessor of $(U_i \oplus X_i)^\perp \oplus Y_i$ for each $i \in I$.   Indeed, consider that for each $i$ there are no subspaces of the form $F^\perp$ for $F \in \F$ properly between $(U_i \oplus X_i)^\perp$ and $(U_i)^\perp$, since $U_i \subset U_i \oplus X_i$ is an immediate predecessor-successor pair in $\F$.  Furthermore, one has $(U_i \oplus X_i)^\perp \subset (U_i \oplus X_i)^\perp \oplus Y_i \subset (U_i)^\perp$ because of the identity $U_i = ((U_i \oplus X_i)^\perp \oplus Y_i)^\perp$.  
Proposition~\ref{genf} (\ref{uniquesemic}) gives the existence of a unique semiclosed generalized flag $\G$ with the same stabilizer as the above set.  Then $\F$, $\G$ form a taut couple by construction, and Theorem~\ref{iffgl} implies that $\l$ is a Levi component of the self-normalizing parabolic subalgebra $\St_\F \cap \St_\G$.
\end{proof}

The above corollary enables us to determine a self-normalizing parabolic subalgebra with a prescribed Levi component using only subspaces of $V$.  The corollary below shows that any self-normalizing parabolic subalgebra of $\g$ can be so described.

\begin{cor} \label{fullcor}
Let $\p$ be a parabolic subalgebra of $\g$, with the associated taut couple $\F$, $\G$.  Suppose the subalgebra $\bigoplus_{i \in I} \sl(X_i,Y_i)$ is a Levi component of $\p$, where $X_i \subset V$ and $Y_i \subset V_*$.  Assume $\dim X_i \geq 2$ for all $i \in I$.    

Then there exist subspaces $U_i \subset V$ for $i \in I$ with
$$U_i = ((U_i \oplus X_i)^\perp \oplus Y_i )^\perp$$
such that $\F$ is maximal among the semiclosed generalized flags having immediate predecessor-successor pairs
$U_i \subset U_i \oplus X_i$ for all $i \in I$.
\end{cor}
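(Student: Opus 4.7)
The plan is to take $U_i := F'_{\kappa(i)}$, where $\kappa : I \to D$ is the bijection assigned to the Levi component of $\p$ in the proof of Theorem~\ref{iffgl}. Then $U_i \oplus X_i = F''_{\kappa(i)}$, so $(U_i, U_i \oplus X_i)$ is automatically an immediate predecessor-successor pair of $\F$. To verify the identity $U_i = ((U_i \oplus X_i)^\perp \oplus Y_i)^\perp$, I would combine the bijection from Section~2.3 in the $V \neq V_*$ case (giving $F'_\gamma = (G''_\gamma)^\perp$ and, symmetrically, $G'_\gamma = (F''_\gamma)^\perp$ for $\gamma \in C$) with Theorem~\ref{iffgl} applied on the $V_*$ side (giving $G''_{\kappa(i)} = G'_{\kappa(i)} \oplus Y_i$). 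Chasing definitions: $(U_i \oplus X_i)^\perp = (F''_{\kappa(i)})^\perp = G'_{\kappa(i)}$, so $(U_i \oplus X_i)^\perp \oplus Y_i = G''_{\kappa(i)}$, and finally $((U_i \oplus X_i)^\perp \oplus Y_i)^\perp = (G''_{\kappa(i)})^\perp = F'_{\kappa(i)} = U_i$.

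The substance of the proof is maximality. I would argue by contradiction: suppose $\F^*$ is a semiclosed generalized flag with $\F \subsetneq \F^*$ in which each $(F'_{\kappa(i)}, F''_{\kappa(i)})$ remains an immediate predecessor-successor pair. Pick any $F \in \F^* \setminus \F$. Since $\F^*$ is totally ordered, $F$ is comparable to every member of $\F$, and the analysis splits into two cases based on whether $F$ sits strictly inside some gap of $\F$ or not.

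If $F'_\alpha \subsetneq F \subsetneq F''_\alpha$ for some $\alpha \in A$, let $F_0$ be the immediate successor of $F'_\alpha$ in $\F^*$, so that $F'_\alpha \subsetneq F_0 \subsetneq F''_\alpha$. For $\alpha \in D$ this contradicts the preservation of $(F'_\alpha, F''_\alpha)$ as an immediate predecessor-successor pair. For $\alpha \in C \setminus D$, $\dim F''_\alpha / F'_\alpha = 1$ leaves no room for $F_0$. For $\alpha \in A \setminus C$, semiclosedness of $\F$ forces $(F'_\alpha)^{\perp\perp} = F''_\alpha$, but then semiclosedness of $\F^*$ on the new pair $(F'_\alpha, F_0)$ demands $(F'_\alpha)^{\perp\perp} \in \{F'_\alpha, F_0\}$, which is impossible.

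If $F$ does not fit strictly inside any gap, set $S := \{\alpha \in A \mid F''_\alpha \subset F\}$ and $T := A \setminus S$. Using that every nonzero $v \in V$ lies in a unique $F''_\beta \setminus F'_\beta$, one checks that $\bigcup_{\alpha \in S} F''_\alpha = F = \bigcap_{\alpha \in T} F'_\alpha$. If $S$ has a maximum or $T$ has a minimum, this forces $F$ to coincide with a member of $\F$, contradicting $F \notin \F$. In the remaining ``limit'' subcase ($S$ has no maximum, $T$ has no minimum), the key point is that $F$ cannot possess an immediate predecessor in $\F^*$ at all: for any $F_- \subsetneq F$, some $F''_{\alpha_0}$ with $\alpha_0 \in S$ must fail to lie in $F_-$ (otherwise $F = \bigcup_{\alpha \in S} F''_\alpha \subset F_-$), and then $F''_{\alpha_0} \in \F \subset \F^*$ sits strictly between $F_-$ and $F$. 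This violates condition~(1) in the definition of a generalized flag, ruling out the limit subcase. I expect this limit analysis to be the main technical step.
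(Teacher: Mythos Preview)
Your construction of $U_i := F'_{\kappa(i)}$ and the verification of the identity $U_i = ((U_i \oplus X_i)^\perp \oplus Y_i)^\perp$ match the paper's proof exactly. For maximality the paper is much terser: it simply observes that every immediate predecessor-successor pair $(F'_\alpha, F''_\alpha)$ with $\alpha \notin D$ satisfies either $\dim F''_\alpha / F'_\alpha = 1$ or $(F'_\alpha)^{\perp\perp} = F''_\alpha$, and asserts that in either case no semiclosed refinement is possible. Your case analysis unpacks this claim and also treats explicitly the ``limit'' case that the paper leaves implicit.

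Two small points need attention. First, in the subcase $\alpha \in A \setminus C$ you take $F_0$ to be the immediate successor of $F'_\alpha$ in $\F^*$, but an element of a generalized flag need not have an immediate successor merely because strictly larger elements exist. The clean fix is to pick $v \in F \setminus F'_\alpha$ and use condition~(2) for $\F^*$ to obtain an immediate pair $(H', H'')$ with $F'_\alpha \subset H' \subsetneq H'' \subset F \subsetneq F''_\alpha$; semiclosedness at $H'$ then forces $F''_\alpha = (F'_\alpha)^{\perp\perp} \subset (H')^{\perp\perp} \subset H'' \subsetneq F''_\alpha$, a contradiction. (This argument incidentally shows $H' = F'_\alpha$, so your $F_0$ does exist in this situation, but that fact requires the justification just given.) Second, in the limit subcase you show that $F$ has no immediate predecessor in $\F^*$, but condition~(1) is violated only if $F$ also has no immediate successor; this follows by the symmetric argument using $T$ and the identity $F = \bigcap_{\alpha \in T} F'_\alpha$, which you stated but did not deploy.
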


\begin{proof}
Fix $i \in I$.  By Theorem~\ref{iffgl}, there exists some $\gamma \in D$ such that $F''_\gamma = F'_\gamma \oplus X_i$ and $G''_\gamma = G'_\gamma \oplus Y_i$.  Then take $U_i := F'_\gamma$.  Because $F'_\gamma = (G''_\gamma)^\perp$, and $G'_\gamma = (F''_\gamma)^\perp$, we see that $((U_i \oplus X_i)^\perp \oplus Y_i)^\perp = ((F''_\gamma)^\perp \oplus Y_i)^\perp = (G'_\gamma \oplus Y_i)^\perp = (G''_\gamma)^\perp = F'_\gamma = U_i$.

The maximality of $\F$ as stated follows from Theorem~\ref{iffgl}, since all other immediate predecessor-successor pairs of $\F$ (i.e.\ for $\alpha \notin D$) have $\dim F''_\alpha / F'_\alpha = 1$ or $(F'_\alpha)^{\perp \perp} = F''_\alpha$, and in either case admit no further refinement.
\end{proof}

The subspaces $U_i$ of the above two corollaries already made an appearance in the proof of Theorem~\ref{firstcharacterization}.  There we constructed a parabolic subalgebra $\St_\F \cap \St_\G$ as in Corollary~\ref{useful} by choosing $U_i := ((\bigoplus_{k \leq i} X_k)^\perp \oplus Y_i)^\perp$ for all $i \in I$.  If $\p$ is any such parabolic subalgebra of which, using the notation of Corollary~\ref{useful}, the subalgebra $\bigoplus_{i \in I} \sl(X_i, Y_i)$ is a Levi component, then one may check that under the induced order on $I$
$$((\bigoplus_{k \leq i} X_k)^\perp \oplus Y_i)^\perp \subset U_i \subset (\bigoplus_{k \geq i} Y_k)^\perp$$
for each $i \in I$.
We claim that it is also possible in general to construct such parabolic subalgebras by taking $U_i : = (\bigoplus_{k \geq i} Y_k)^\perp$ for all $i \in I$.  Indeed, in this case one may verify the property $U_i = ((U_i \oplus X_i)^\perp \oplus Y_i)^\perp$ using Lemma~\ref{tripleperp} (taking $T = (\bigoplus_{k > i} Y_k)^\perp$, with $X=X_i$ and $Y=Y_i$).  That is, the largest possible subspaces $U_i$ also satisfy the hypotheses of Corollary~\ref{useful}.

Let $\l$ be a subalgebra of $\g$.  The next corollary shows that the parabolic subalgebras of which $\l$ is a Levi component can be distinguished using only a single semiclosed generalized flag in $V$.

\begin{cor} \label{onesided}
Let $\p_1$ and $\p_2$ be parabolic subalgebras of $\g$, with the associated taut couples $\F_1$, $\G_1$ and $\F_2$, $\G_2$, respectively.  Suppose $\p_1$ and $\p_2$ have a Levi component in common.   Then $\F_1 = \F_2$ implies $\G_1 = \G_2$.
\end{cor}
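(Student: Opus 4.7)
The plan is to reduce the claim to the uniqueness assertion of Corollary~\ref{useful} by extracting suitable subspaces $U_i \subset V$ from each of $\p_1$ and $\p_2$ via Corollary~\ref{fullcor} and showing these subspaces coincide for $j=1$ and $j=2$.

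First, using Lemma~\ref{notdiagonal} I would write the common Levi component as $\l = \bigoplus_{i \in I} \sl(X_i, Y_i)$ and set $\F := \F_1 = \F_2$. The key preliminary observation is that the map $\kappa : I \rightarrow D$ constructed in the proof of Lemma~\ref{notdiagonal} (characterized by $F'_{\kappa(i)} \oplus X_i \subset F''_{\kappa(i)}$ and $\langle F'_{\kappa(i)}, Y_i \rangle = 0$) depends only on the pair $(\F, \l)$, not on $\G_1$ or $\G_2$. In particular the induced order on $I$ is the same whether obtained from $\p_1$ or from $\p_2$, and the subspaces $U_i := F'_{\kappa(i)}$ are independent of the choice of $\p_j$.

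Next I would apply Corollary~\ref{fullcor} to each of $\p_1$ and $\p_2$: it supplies these same subspaces $U_i$, verifies the self-referential identity $U_i = ((U_i \oplus X_i)^\perp \oplus Y_i)^\perp$, and asserts that $\F$ is maximal among semiclosed generalized flags in $V$ having immediate predecessor-successor pairs $U_i \subset U_i \oplus X_i$ for all $i \in I$. The chain condition $U_i \oplus X_i \subset U_j$ for $i<j$ is automatic, since $U_i \oplus X_i = F''_{\kappa(i)}$, $U_j = F'_{\kappa(j)}$, and $\kappa(i) < \kappa(j)$ in $A$. Because $\p_j$ is defined by trace conditions on $\p_{j+} = \St_\F \cap \St_{\G_j}$ and Levi components are preserved under such passages by \cite[Proposition 4.9]{DP}, $\l$ is a Levi component of $\St_\F \cap \St_{\G_j}$ for $j = 1, 2$. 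Corollary~\ref{useful} then provides a unique semiclosed generalized flag $\G$ in $V_*$ such that $\F, \G$ is a taut couple and $\l$ is a Levi component of $\St_\F \cap \St_\G$. Since both $\G_1$ and $\G_2$ satisfy these conditions, the uniqueness forces $\G_1 = \G_2$.

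The main point requiring care is confirming that the subspaces $U_i$ are genuinely intrinsic to $(\F, \l)$ and do not secretly depend on which parabolic subalgebra (or on its $\G$-factor) produced them. Given the purely $\F$-and-$\l$-based characterization of $\kappa$, this amounts to a bookkeeping check that is already implicit in the proof of Lemma~\ref{notdiagonal}; once it is in place, the rest of the argument is a direct appeal to Corollaries~\ref{fullcor} and~\ref{useful}.
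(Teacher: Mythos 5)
Your proposal is correct and follows essentially the same route as the paper: both arguments extract the subspaces $U_\gamma = F'_\gamma$ from the common flag $\F$, verify via Corollary~\ref{fullcor} the identity $U_\gamma = ((U_\gamma \oplus X_\gamma)^\perp \oplus Y_\gamma)^\perp$ together with the maximality of $\F$ and the chain condition, and then invoke the uniqueness assertion of Corollary~\ref{useful}. Your explicit check that $\kappa$, and hence the $U_i$, depend only on $(\F,\l)$ is a worthwhile clarification of a point the paper leaves implicit (it is forced because $D$ and the subspaces $F'_\gamma$, $F''_\gamma$ are determined by $\F$ alone, and the $X_i$ are determined by $\l$), but it does not change the substance of the argument.
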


\begin{proof}
By Theorem~\ref{iffgl}, the common Levi component of $\p_1$ and $\p_2$ is of the form $\bigoplus_{\gamma \in D} \sl (X_\gamma, Y_\gamma)$ for some subspaces $X_\gamma \subset V$ and $Y_\gamma \subset V_*$.    As seen in Corollary~\ref{fullcor}, the generalized flag $\F_1 = \F_2$ is maximal among the semiclosed generalized flags in $V$ having $F'_\gamma \subset F'_\gamma \oplus X_\gamma$ as immediate predecessor-successor pairs for all $\gamma \in D$.  Evidently $F'_\gamma \oplus X_\gamma \subset F'_\eta$ for all $\gamma < \eta$ because $\F$ is a generalized flag; the property $F'_\gamma = ((F'_\gamma \oplus X_\gamma)^\perp \oplus Y_\gamma)^\perp$ was shown in the proof of Corollary~\ref{fullcor}.  Thus the uniqueness claim of Corollary~\ref{useful} yields $\G_1 = \G_2$.
\end{proof}

Consider the special case that a parabolic subalgebra $\p$ of $\g$ has $0$ as a maximal locally semisimple subalgebra.  Then $\p$ is a \emph{Borel} subalgebra (that is, a maximal locally solvable subalgebra) of $\g$.  Corollary~\ref{onesided} implies in this case that the associated taut couple of $\p$ is determined by $\F$, the part of the taut couple in $V$.  Since maximal locally solvable subalgebras are minimal parabolic subalgebras, trace conditions are not relevant in this case.  Hence a Borel subalgebra of $\g$ is determined by a single (maximal) semiclosed generalized flag in $V$, as was proved in \cite{DimitrovP1}.

\section{Counting parabolic subalgebras with given Levi component} \label{counting}

In this section we address the question of how many parabolic subalgebras of $\g$ have a given locally semisimple subalgebra $\l$ as a Levi component.  If $\l$ is a Levi component of a parabolic subalgebra $\p$ of a finitary Lie algebra, then $\l$ is also a Levi component of $\p_+$.  Recall that $\p_+$ is a self-normalizing parabolic subalgebra, and $\p$ is defined by trace conditions on $\p_+$.
Therefore we will usually consider first the self-normalizing parabolic subalgebras of $\g$ of which $\l$ is a Levi component.

Fix for $i \in I$ commuting standard special linear subalgebras $\s_i \subset \g$.   When $|I| = n < \infty$, Theorem~\ref{firstcharacterization} implies that there are at least $n!$ self-normalizing parabolic subalgebras of $\g$ having $\bigoplus_i \s_i$ as a Levi component; similarly there are uncountably many such parabolic subalgebras when $I$ is a countable set.  With Theorem~\ref{nblocks} we find criteria for this number to be finite, and we also give an upper bound for this number when it is finite.

\begin{theorem}\label{nblocks}
Fix $\l = \sl(X_1,Y_1) \oplus \cdots \oplus \sl(X_n,Y_n) \subset \g$ for some subspaces $X_i \subset V$ and $Y_i \subset V_*$ with $\dim X_i \geq 2$ for all $i$.  The number of self-normalizing parabolic subalgebras of $\gl(V,V_*)$ with $\l$ as a Levi component is finite if and only if
$$\dim (\bigoplus_{i \notin J} Y_i)^\perp / (\bigoplus_{j \in J} X_j)^{\perp \perp} \leq 1$$
for all subsets $J \subset \{ 1, 2, \ldots, n \}$.  When finite, this number is at most $3 \cdot 2^{n-2} \cdot n!$ for $n \geq 2$, and at most $2$ for $n = 1$; it is uncountable when infinite.
\end{theorem}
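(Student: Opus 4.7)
The plan is to combine the parameterization of self-normalizing parabolic subalgebras from Section~\ref{corollaries} with a combinatorial count.  By Corollary~\ref{onesided}, each self-normalizing parabolic of $\gl(V,V_*)$ with $\l$ as a Levi component is determined by its flag $\F$ in $V$.  By Corollaries~\ref{fullcor} and \ref{useful}, producing such an $\F$ amounts to choosing a linear order on $I = \{1, \ldots, n\}$ together with closed subspaces $U_i \subset V$ satisfying $U_i \oplus X_i \subset U_j$ for $i < j$ and $U_i = ((U_i \oplus X_i)^\perp \oplus Y_i)^\perp$, followed by a maximal semiclosed refinement of the resulting chain that introduces no new $D$-pair.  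For $J \subset \{1, \ldots, n\}$, set
\[
Z_J := (\bigoplus_{i \notin J} Y_i)^\perp \big/ (\bigoplus_{j \in J} X_j)^{\perp\perp}.
\]

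For the direction that a failure of the dimension bound forces uncountably many parabolics, I would suppose $\dim Z_J \geq 2$ for some $J$ and choose a linear order on $I$ placing $J$ as the initial segment of length $m = |J|$, with $U_m$ at the minimum of its interval and $U_{m+1} = B := (\bigoplus_{i \notin J} Y_i)^\perp$.  Setting $A := (\bigoplus_{j \in J} X_j)^{\perp\perp}$, both $A$ and $B$ are closed with $A \subsetneq B$ and $U_m \oplus X_m \subset A \subset B = U_{m+1}$.  For each of the uncountably many lines $\ell \subset B/A$, the preimage $C \subset V$ is a closed subspace with $A \subsetneq C \subsetneq B$, yielding a distinct semiclosed intermediate subspace in $\F$ whose new immediate pairs are one-dimensional and hence not in $D$.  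By Corollary~\ref{onesided}, this produces uncountably many self-normalizing parabolics with $\l$ as a Levi component.

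For the reverse direction and the explicit bound, I would assume $\dim Z_J \leq 1$ for all $J$ and fix a linear order on $I$.  The $n + 1$ regions of the chain are indexed by the initial segments $J_0 = \emptyset \subsetneq J_1 \subsetneq \cdots \subsetneq J_n = I$, each with $\dim Z_{J_i} \leq 1$.  A region with $\dim Z_{J_i} = 0$ is rigid.  A region with $\dim Z_{J_i} = 1$ contributes at most a binary choice, reflecting whether $U_{i+1}$ is forced to equal $U_i \oplus X_i$ inside the one-dimensional quotient or whether an additional closed intermediate subspace appears.  The two end regions at $J_0$ and $J_n$ couple with the ambient endpoints $0$ and $V$; a case analysis bounds their joint contribution by $3$ options (rather than the naive $2 \cdot 2 = 4$).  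Combining these estimates gives at most $3 \cdot 2^{n-2}$ admissible $\F$'s per order for $n \geq 2$, and at most $2$ when $n = 1$.  Multiplying by the $n!$ orders yields the claimed total.

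The main obstacle I anticipate lies in the combinatorial count underlying the upper bound: in particular, verifying the joint contribution of $3$ from the two end regions, and confirming that the interior binary choices remain independent of each other modulo the global compatibility constraints.  A related subtlety is checking that distinct choices of $(U_i)$ and refinement really yield distinct flags (hence distinct parabolics, by Corollary~\ref{onesided}), so that no overcounting occurs.
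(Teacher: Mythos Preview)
Your overall plan matches the paper's: parameterize self-normalizing parabolics via Corollaries~\ref{useful}--\ref{onesided} by an order on $I$ together with subspaces $U_1,\ldots,U_n$, and then count. The uncountable direction is essentially the paper's argument and is fine (the preimages $C$ are closed because $A=(\bigoplus_{j\in J}X_j)^{\perp\perp}$ is closed and $\dim C/A<\infty$). Your route to the preliminary bound is also right in spirit, but there is a bookkeeping slip: of the $n+1$ regions $J_0,\ldots,J_n$, only $J_0,\ldots,J_{n-1}$ carry a choice, since region $J_i$ bounds $U_{i+1}$ via
\[
(\textstyle\bigoplus_{j\le i}X_j)^{\perp\perp}\ \subset\ U_{i+1}\ \subset\ (\textstyle\bigoplus_{j>i}Y_j)^\perp,
\]
and there is no $U_{n+1}$ to choose (the top of $\F$ is $V$). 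That gives exactly $n$ binary choices and the paper's preliminary bound $2^n\cdot n!$. Your worry about distinct choices of $(U_i)$ producing the same flag is irrelevant here: only an upper bound is being claimed.

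The genuine gap is in the sharpening to $3\cdot 2^{n-2}$. You locate the saving at the two extreme regions $J_0$ and $J_n$; but $J_n$ contributes no choice, so that pair yields at most $2$ options and you recover only $2^n$, not $3\cdot 2^{n-2}$. In the paper the factor $3$ comes from the \emph{adjacent} pair $U_1,U_2$ (regions $J_0$ and $J_1$), via a short case analysis. Write $U_i^{\min},U_i^{\max}$ for the endpoints of the interval containing $U_i$. If $U_1^{\max}\not\subset U_2^{\min}$, the chain condition $U_1\oplus X_1\subset U_2$ already forbids the combination $(U_1^{\max},U_2^{\min})$, leaving at most $3$. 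If instead $U_1^{\min}\subsetneq U_1^{\max}\subset U_2^{\min}$, then $(X_1)^{\perp\perp}\cap Y_1^\perp=0$ and $\dim(\bigoplus_i Y_i)^\perp=1$; feeding this into the hypothesis $\dim Z_{\{1\}}\le 1$ forces $U_2^{\min}=U_2^{\max}$, so $U_2$ is unique and there are at most $2$ combinations. This is exactly the ``joint contribution'' analysis you flagged as the main obstacle, but it takes place between two adjacent $U_i$'s at one end of the chain, not across the two ends.
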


\begin{proof}
Suppose first that there exists a subset $J \subset \{ 1, 2, \ldots, n \}$ for which
$$\dim (\bigoplus_{i \notin J} Y_i)^\perp / (\bigoplus_{j \in J} X_j)^{\perp \perp} > 1.$$
Without loss of generality, suppose $J = \{ 1, 2, \ldots, k \}$.  We define
$$U_j =
\begin{cases}
(X_1 \oplus X_2 \oplus \cdots \oplus X_j)^{\perp \perp} \cap Y_j^\perp & \textrm{if } 1 \leq j \leq k \\
(Y_j \oplus Y_{j+1} \oplus \ldots \oplus Y_n)^\perp & \textrm{if } k < j \leq n.
\end{cases}
$$
One may check that $U_1 \subset U_2 \subset \cdots \subset U_n$.  
As described in the paragraph after Corollary~\ref{fullcor}, one has $U_j = ((U_j \oplus X_j)^\perp \oplus Y_j)^\perp$ for $j = 1, \ldots n$.  

Let $\F_0$ be the semiclosed generalized flag
$$0 \subset \cdots \subset U_i \subset U_i \oplus X_i \subset (U_i \oplus X_i)^{\perp \perp} \subset U_{i+1} \subset \cdots \subset V.$$
By Corollary~\ref{useful}, any semiclosed generalized flag $\F$ maximal among the refinements of $\F_0$ retaining the immediacy of the pairs $U_i \subset U_i \oplus X_i$ for $i = 1, \ldots n$ determines a self-normalizing parabolic subalgebra of which $\l$ is a Levi component.

Consider the following portion of $\F_0$:
$$U_k \subset U_k \oplus X_k \subset (U_k \oplus X_k)^{\perp \perp} \subset U_{k+1}.$$
Since $X_1 \oplus X_2 \oplus \cdots \oplus X_{k-1} \subset U_k \subset (X_1 \oplus X_2 \oplus \cdots \oplus X_k)^{\perp \perp}$, we see that $$(U_k \oplus X_k)^{\perp \perp} = (X_1 \oplus X_2 \oplus \cdots \oplus X_k)^{\perp \perp}.$$  We have assumed that $(X_1 \oplus X_2 \oplus \cdots \oplus X_k)^{\perp \perp}$ has codimension at least $2$ in $(Y_{k+1} \oplus Y_{k+2} \oplus \cdots \oplus Y_n)^\perp = U_{k+1}$.  Therefore there are uncountably many closed subspaces between them, and any such closed subspace can appear in a refinement $\F$, $\G$ as described above.  Since different taut couples yield different self-normalizing parabolic subalgebras \cite[Proposition 3.8]{DP}, we conclude that there are uncountably many self-normalizing parabolic subalgebras with $\l$ as a Levi component.

Now suppose that $$\dim (\bigoplus_{i \notin J} Y_i)^\perp / (\bigoplus_{j \in J} X_j)^{\perp \perp} \leq 1$$
for all subsets $J \subset \{ 1, 2, \ldots, n \}$.  We show first that there are at most $2^n \cdot n!$ self-normalizing parabolic subalgebra of $\gl(V,V_*)$ of which $\l$ is a Levi component.  Fix such a parabolic subalgebra $\p$, and denote the associate taut couple by $\F$, $\G$.  By Corollary~\ref{fullcor}, there exist subspaces $U_i$ for $i = 1, \ldots n$ totally ordered by inclusion with the properties listed there.  Without loss of generality, let us assume that $U_1 \subset U_2 \subset \cdots \subset U_n$.  This reindexing produces the factor of $n!$.

Then $\F$ is related to the semiclosed generalized flag
$$0 \subset U_1 \subset \cdots \subset U_i \subset U_i \oplus X_i \subset (U_i \oplus X_i)^{\perp \perp}  \subset U_{i+1} \subset \cdots \subset (U_n \oplus X_n)^{\perp \perp} \subset V,$$
by maximally refining those pairs of the form $(U_i \oplus X_i)^{\perp \perp} \subset U_{i+1}$ for $i = 0, 1, \ldots n$, where we use the notation $U_0 = X_0 = 0$ and $U_{n+1} = V$. 

For $i = 0, \ldots , n$, we have
$$(X_1 \oplus \cdots \oplus X_i)^{\perp \perp} \subset (U_i \oplus X_i)^{\perp \perp} \subset U_{i+1} \subset (Y_{i+1} \oplus \cdots \oplus Y_n)^\perp.$$
By hypothesis $\dim (Y_{i+1} \oplus \cdots \oplus Y_n)^\perp / (X_1 \oplus \cdots \oplus X_i)^{\perp \perp} \leq 1$ for each $i$, hence there are at most two possibilities for each $U_i$.  Thus there at most $2^n$ possible choices of $U_1$, \ldots, $U_n$.  Furthermore, since the pairs $(U_i \oplus X_i)^{\perp \perp} \subset U_{i+1}$ for each $i$ have codimension at most $1$, no further refinement of them is possible.  Hence $\F$ equals the semiclosed generalized flag given above, and by Corollary~\ref{useful} $\p$ is determined by the choice of $U_1$, \ldots, $U_n$.  This shows that the number of self-normalizing parabolic subalgebras with $\l$ as a Levi component is at most $2^n \cdot n!$.  This completes the proof of the if and only if statement, as well as the estimate that the number is question is uncountable when infinite and at most $2^n \cdot n!$ when finite.

Now assume that $n \geq 2$, and that the number of self-normalizing parabolic subalgebras with $\l$ as a Levi component is finite.  To prove the stated upper bound of $3 \cdot 2^{n-2} \cdot n!$, we show that there are at most three possible combinations of $U_1$ and $U_2$.  

Assume therefore that
$$(X_1)^{\perp \perp} \cap (Y_1)^\perp \subsetneq (Y_1 \oplus Y_2 \oplus \cdots \oplus Y_n)^\perp \subset (X_1 \oplus X_2)^{\perp \perp} \cap (Y_2)^\perp.$$
(If this assumption does not hold, it is clear that there are at most three possibilities for $U_1$ and $U_2$.)  It suffices to show that
$$(X_1 \oplus X_2)^{\perp \perp} \cap (Y_2)^\perp = (Y_2 \oplus \cdots \oplus Y_n)^\perp.$$ 

We have already shown that $\dim (Y_1 \oplus Y_2 \oplus \cdots \oplus Y_n)^\perp \leq 1$.  The condition $$(X_1)^{\perp \perp} \cap (Y_1)^\perp \subsetneq (Y_1 \oplus Y_2 \oplus \cdots \oplus Y_n)^\perp$$ therefore implies that $(X_1)^{\perp \perp} \cap (Y_1)^\perp = 0$ and  $\dim (Y_1 \oplus Y_2 \oplus \cdots \oplus Y_n)^\perp = 1$.

We have also assumed that $(Y_1 \oplus Y_2 \oplus \cdots \oplus Y_n)^\perp \subset (X_1 \oplus X_2)^{\perp \perp} \cap (Y_2)^\perp$, so indeed $$(X_1)^{\perp \perp} + (Y_1 \oplus Y_2 \oplus \cdots \oplus Y_n)^\perp \subset (X_1 \oplus X_2)^{\perp \perp} \cap (Y_2)^\perp.$$  Since $(X_1)^{\perp \perp}  \cap (Y_1 \oplus Y_2 \oplus \cdots \oplus Y_n)^\perp \subset (X_1)^{\perp \perp} \cap (Y_1)^\perp = 0$, we have the direct sum 
$$(X_1)^{\perp \perp} \oplus (Y_1 \oplus Y_2 \oplus \cdots \oplus Y_n)^\perp \subset (X_1 \oplus X_2)^{\perp \perp} \cap (Y_2)^\perp \subset (Y_2 \oplus \cdots \oplus Y_n)^\perp.$$ 
We have already proved that the codimension of $(X_1)^{\perp \perp}$ in  $(Y_2 \oplus \cdots \oplus Y_n)^\perp$ is at most $1$; hence
$$(X_1)^{\perp \perp} \oplus (Y_1 \oplus Y_2 \oplus \cdots \oplus Y_n)^\perp = (X_1 \oplus X_2)^{\perp \perp} \cap (Y_2)^\perp = (Y_2 \oplus \cdots \oplus Y_n)^\perp,$$
and we are done. 
\end{proof}

Let us consider the finite numbers obtained as the number of self-normalizing parabolic subalgebras with a given subalgebra as a Levi component, subject to the restriction that the given subalgebra has $n$ simple direct summands.  In the case $n \geq 2$, Theorem~\ref{nblocks} says that the number of self-normalizing parabolic subalgebras with Levi component $\s_1 \oplus \cdots \oplus \s_n$ is at most $3 \cdot 2^{n-2} \cdot n!$ if it is finite.   (One can imagine that this upper bound is typically not sharp.)  Considerations completely analogous to the finite-dimensional case show that the maximum of this set of finite numbers is at least $(n+1)!$.   Nevertheless, the very last example in this section shows that, unlike in the finite-dimensional case, $(n+1)!$ is not an upper bound when $n=5$.

As an illustration of Theorem~\ref{nblocks}, one might ask how many parabolic subalgebras of $\gl(V,V_*)$ have Levi component $\sl(X_1 , Y_1) \oplus \sl(X_2 , Y_2)$, where 
\begin{align*}
X_1 & :=  \Span \{ v_1 + v_{2i - 1} \mid i \geq 2 \} & Y_1 & :=   \Span \{ v_{2i - 1}^* \mid i \geq 2  \} \\
X_2 & :=  \Span \{ v_1 + v_{2i} \mid i \geq 2 \} & Y_2 & :=  \Span \{  v_2^* + v_{2i}^* \mid i \geq 2  \},
\end{align*}
and $V$ and $V_*$ are vector spaces with dual bases $\{ v_i \mid i \in \Z_{> 0} \}$ and $\{ v_i^* \mid i \in \Z_{> 0} \}$.  
To answer the question, we compute the four quotient spaces in the hypotheses of Theorem~\ref{nblocks}:
\begin{itemize}
\item $(Y_1 \oplus Y_2)^\perp = \C v_1 $,
\item $(Y_1)^\perp / (X_2)^{\perp \perp} = \C v_1 \oplus \Span \{ v_{2i} \mid i \geq 1 \} /  \C v_1 \oplus \Span \{ v_{2i} \mid i \geq 2 \} $,
\item $(Y_2)^\perp / (X_1)^{\perp \perp} =  \Span \{ v_{2i -1} \mid i \geq 1 \} / \Span \{ v_{2i - 1} \mid i \geq 1 \}$,
\item $V / (X_1 \oplus X_2)^{\perp \perp} = V / \Span \{ v_i \mid i \neq 2 \}$.
\end{itemize}
Since all the above have dimension no greater than $1$, Theorem~\ref{nblocks} implies that only a finite number of self-normalizing parabolic subalgebras of $\gl(V,V_*)$ have $\sl(X_1 , Y_1) \oplus \sl(X_2 , Y_2)$ as a Levi component.  Indeed, one may check that there are precisely three such self-normalizing parabolic subalgebras.  Explicitly, they are the stabilizers of the following three taut couples.  The order $1 < 2$ gives the taut couple
\begin{align*}
0 & \subset \C v_1  \subset  \C v_1 \oplus X_1 \subset \C v_1 \oplus X_1 \oplus X_2 \subset V \\
V_* &  \supset (v_1)^\perp  \supset Y_2 \oplus \C v_2^* \supset \C v_2^* \supset  0 , \\
\intertext{while the other order gives the two taut couples}
0 & \subset \C v_1  \subset  \C v_1 \oplus X_2 \subset \C v_1 \oplus X_2 \oplus X_1 \subset V \\
V_* &  \supset (v_1)^\perp  \supset  \C v_2^* \oplus Y_1 \supset \C v_2^* \supset  0 \\
\intertext{and}
0 & \subset \C v_1  \subset  \C v_1 \oplus X_2 \subset \C v_1 \oplus X_2 \oplus \C v_2 \subset V \\
V_* &  \supset (v_1)^\perp  \supset Y_1 \oplus \C v_2^* \supset Y_1 \supset  0.  
\end{align*}

Corollary~\ref{1block} addresses the special case $n=1$ of Theorem~\ref{nblocks}.

\begin{cor} \label{1block}
Fix subspaces $X \subset V$ and $Y \subset V_*$ such that $\langle \cdot , \cdot \rangle |_{X \times Y}$ is nondegenerate.  The number of self-normalizing parabolic subalgebras of $\g$ with $\sl(X,Y)$ as a Levi component is finite if and only if
$$\dim X^\perp \leq 1 \textrm{ and } \dim Y^\perp \leq 1.$$
When it is finite, this number is $1$ if $\langle Y^\perp , X^\perp \rangle = 0$ and $2$ if $\langle Y^\perp , X^\perp \rangle \neq 0$.
\end{cor}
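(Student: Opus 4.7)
The plan is to specialize Theorem~\ref{nblocks} and Corollary~\ref{useful} to the case $n=1$, and then carry out a short direct case analysis.

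For the finiteness criterion, I would apply Theorem~\ref{nblocks} with $n=1$: the two subsets $J \subset \{1\}$ yield the conditions $\dim Y^\perp \leq 1$ (from $J = \emptyset$, using $0^{\perp\perp} = 0$) and $\dim V/X^{\perp\perp} \leq 1$ (from $J = \{1\}$, using $0^\perp = V$). Because $V/X^{\perp\perp}$ is paired with $X^\perp$ nondegenerately under the Mackey pairing, the second condition is equivalent to $\dim X^\perp \leq 1$.

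To count, I would invoke Corollary~\ref{useful} with $|I|=1$, which reduces the problem to enumerating subspaces $U \subset V$ satisfying $U \subset Y^\perp$ and $U = ((U \oplus X)^\perp \oplus Y)^\perp$, with distinct choices of $U$ yielding distinct self-normalizing parabolics by \cite[Proposition 3.8]{DP}. Since $\dim Y^\perp \leq 1$, the only candidates are $U = 0$ and $U = Y^\perp$.

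The final step is a direct computation of the identity $U = ((U \oplus X)^\perp \oplus Y)^\perp$ on these two candidates, using $(A \oplus B)^\perp = A^\perp \cap B^\perp$. One obtains that $U = 0$ is valid if and only if $X^{\perp\perp} \cap Y^\perp = 0$, and $U = Y^\perp$ is valid whenever $Y^\perp \neq 0$ (here one uses $(Y^\perp \oplus X)^\perp = Y^{\perp\perp} \cap X^\perp$ and splits according to whether a generator of $X^\perp$ lies in $Y^{\perp\perp}$, i.e.\ according to the value of $\langle Y^\perp, X^\perp \rangle$). The intersection $X^{\perp\perp} \cap Y^\perp$ vanishes precisely when either $Y^\perp = 0$, or both $Y^\perp$ and $X^\perp$ are one-dimensional with generators pairing nontrivially. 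Combining, one gets exactly one valid $U$ when $\langle Y^\perp, X^\perp \rangle = 0$ and exactly two when $\langle Y^\perp, X^\perp \rangle \neq 0$.

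The main obstacle is the bookkeeping in this last step: one has to verify that the boundary cases $Y^\perp = 0$ and $X^\perp = 0$ each collapse to a single valid $U$ and align with the stated dichotomy, noting that in these boundary cases $\langle Y^\perp, X^\perp \rangle$ is automatically zero.
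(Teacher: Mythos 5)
Your proposal is correct and follows essentially the same route as the paper: finiteness via Theorem~\ref{nblocks} with $n=1$ (using $\dim V/X^{\perp\perp}=\dim X^\perp$), reduction via Corollaries~\ref{useful} and~\ref{fullcor} to counting subspaces $U$ with $U=((U\oplus X)^\perp\oplus Y)^\perp$, and a case analysis on $\langle Y^\perp, X^\perp\rangle$. The only cosmetic difference is that the paper first derives the sandwich $X^{\perp\perp}\cap Y^\perp\subset U\subset Y^\perp$ and reads off the count, whereas you test the two candidates $U=0$ and $U=Y^\perp$ directly; both yield the same dichotomy, including the boundary cases $X^\perp=0$ and $Y^\perp=0$.
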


\begin{proof}
Theorem~\ref{nblocks} implies that the number of such parabolic subalgebras is finite if and only if 
$\dim Y^\perp / 0 \leq 1$ and $\dim V / X^{\perp \perp} \leq 1$.  Since $\dim V / X^{\perp \perp} = \dim X^\perp$, the if and only if statement is clear.

Now suppose $\dim X^\perp \leq 1$ and $\dim Y^\perp \leq 1$.  No further refinement of the semiclosed generalized flag $0 \subset U \subset U \oplus X \subset V$ is possible for any subspace $U \subset V$ such that $U = ((U \oplus X)^\perp \oplus Y)^\perp$.  Therefore by Corollaries~\ref{useful} and \ref{fullcor}, the parabolic subalgebras in question are in bijection with the subspaces $U \subset V$ such that $U = ((U \oplus X)^\perp \oplus Y)^\perp$.  

Let $U \subset V$ be such that $U = ((U \oplus X)^\perp \oplus Y)^\perp$.  It follows that $X^{\perp \perp} \cap Y^\perp \subset U \subset Y^\perp$. 
 If $\langle Y^\perp , X^\perp \rangle = 0$, then $X^{\perp \perp} \cap Y^\perp = Y^\perp$, so $U = Y^\perp$, i.e.\ there is exactly one such parabolic subalgebra.  If $\langle Y^\perp, X^\perp \rangle \neq 0$, then $X^{\perp \perp} \cap Y^\perp=0$ and $Y^\perp \neq 0$, so $U=0$ and $U = Y^\perp$ are the two possibilities.
\end{proof}

The following example gives an illustration of Corollary~\ref{1block}.  
We will find all parabolic subalgebras of $\gl(V,V_*)$ with the simple subalgebra $\sl(X,Y)$ as a Levi component, where 
\begin{align*}
X & :=  \Span \{ v_1 + v_i \mid i \geq 2 \}  & Y & :=  \Span \{ v_i^* \mid i \geq 2 \},
\end{align*}
and $\{ v_i \mid i \in \Z_{> 0} \}$ and $\{ v_i^* \mid i \in \Z_{> 0} \}$ are dual bases of the vector spaces $V$ and $V_*$.  

We compute $X^\perp = 0$ and $Y^\perp = \C v_1$.  By Corollary~\ref{1block} there is exactly $1$ self-normalizing parabolic subalgebra of $\gl(V,V_*)$ having $\sl(X,Y)$ as a Levi component.  It is the stabilizer of the taut couple
\begin{eqnarray*}
0 \subset & \C v_1 & \subset V\\
V_* \supset & Y & \supset 0, 
\end{eqnarray*}
which by a computation is nothing but $$\p_+ = \big( (v_1^*)^\perp \otimes v_1^* \big) \subsetplus \big( ( \C v_1 \otimes v_1^*) \oplus ((v_1^*)^\perp \otimes Y) \big).$$  Thus every parabolic subalgebra $\p$ of $\gl(V,V_*)$ with $\sl(X,Y)$ as a Levi component is defined by trace conditions on the one infinite-dimensional block of a locally reductive part of $\p_+$.  There are precisely two such parabolic subalgebras, namely $\p_+$ and $\p_- = \big( (v_1^*)^\perp \otimes v_1^* \big) \subsetplus \big( ( \C v_1 \otimes v_1^*) \oplus  \sl((v_1^*)^\perp, Y) \big)$.

Similarly, there are precisely two parabolic subalgebras of $\sl(V,V_*)$ with Levi component $\sl(X,Y)$.  They are the traceless parts of the two parabolic subalgebras in the previous paragraph.

It is not hard to see now that it is impossible to extend $\sl(X,Y)$ to a locally reductive part of $\p_+$.  Any locally reductive part of $\p_+$ must be isomorphic to the locally reductive part $(\C v_1 \otimes v_1^*) \oplus ((v_1^*)^\perp \otimes Y)$, hence it must have two commuting blocks.  However, the centralizer of $\sl(X,Y)$ in $\gl(V,V_*)$ is trivial.  Relatedly, one may check that $\gl(X,Y)$ is a maximal locally reductive subalgebra of $\p_+$ which is not a locally reductive part of $\p_+$.

We conclude this section with an example to demonstrate that $(n+1)!$ is not an upper bound for the finite numbers which occur as the number of self-normalizing parabolic subalgebras with prescribed Levi component when $n=5$.
We claim that there are precisely $8 \cdot 5!$ self-normalizing parabolic subalgebras of $\gl(V,V_*)$ with Levi component  $\sl(X_1,Y_1) \oplus \cdots \oplus \sl(X_5,Y_5)$, where  
\begin{align*}
X_k & := \Span \{ v_i \mid i = k \operatorname{mod} 5 \} & Y_k & := \Span \{ v_i^* \mid i = k \operatorname{mod} 5 \},
\end{align*}
in the following notation.   We take $V$ and $V_*$ to be the vector spaces with bases $\{z\} \cup \{w_1, w_2, \ldots, w_{15} \} \cup \{ v_i \mid i \in \Z_{>0} \}$ and $\{ \tilde{z} \} \cup \{ \tilde{w}_1, \tilde{w}_2, \ldots, \tilde{w}_{15}  \} \cup \{ v_i^* \mid i \in \Z_{>0} \}$.  
Let $\langle \cdot , \cdot \rangle : V \times V_* \rightarrow \C$ be the nondegenerate pairing defined by setting
\begin{align*}
\langle v_i , v_j^* \rangle & = \delta_{ij} \\
\langle z , \tilde{z} \rangle & = 0 \\
\langle w_k , \tilde{w}_l \rangle & = 1 \\
\langle v_i, \tilde{z} \rangle = \langle z , v_i^* \rangle & = 0 \\
\langle z , \tilde{w}_k \rangle = \langle w_k , \tilde{z} \rangle & =
\begin{cases}
0 & \textrm{if } 1 \leq k \leq 10 \\
1 & \textrm{if } 11 \leq k \leq 15
\end{cases} \\
\langle v_i, \tilde{w}_k \rangle = \langle w_k , v_i^* \rangle & = 
\begin{cases}
1 & \textrm{if } i \textrm{ is congruent $\operatorname{mod} 40$ to an element of } S_k \\
0 & \textrm{otherwise}
\end{cases}
\end{align*}
for all $i, j \in \Z_{>0}$ and $k, l \in \{1, \ldots, 15 \}$, 
where the sets $S_k$ are the following:
\begin{align*}
S_1 & = \{1,2\} &
S_5 & = \{9, 12\} &
S_9 & = \{ 15, 18 \} & 
S_{13} &= \{29,30,31,32 \} \\
S_2 & = \{3,6\} & 
S_6 & = \{13,14\} &
S_{10} & = \{19, 20\} &
S_{14} & = \{33,34,35,36 \} \\
S_3 & = \{7,8\} & 
S_7 & = \{5, 16\} & 
S_{11} &= \{21,22,23,24 \} &
S_{15} & = \{37,38,29,0 \}. \\
S_4 & = \{ 4, 11\} &
S_8 & = \{10,17\} &
S_{12} &= \{25,26,27,28 \} &&
\end{align*}
 
It suffices to prove that there are $8$ such parabolic subalgebras inducing the usual order $1 < 2 < 3 <4 < 5$, due to symmetry.  By Corollaries~\ref{useful} and \ref{fullcor}, these parabolic subalgebras are in correspondence with sets of subspaces $U_1$, \ldots, $U_5 \subset V$ such that $U_i = ((U_i \oplus X_i)^\perp \oplus Y_i )^\perp$ and $U_i \oplus X_i \subset U_{i+1}$.
Indeed, the parabolic subalgebra associated to $U_1$, \ldots, $U_5$ is the stabilizer of the taut couple
\begin{align*} 
0 \subset U_1 \subset U_1 \oplus X_1  \subset (U_1 \oplus X_1)^{\perp \perp} & \subset  U_2 \subset U_2 \oplus X_2 \subset \cdots \\
\cdots \subset (U_4 \oplus X_4)^{\perp \perp} & \subset U_5 \subset U_5 \oplus X_5 \subset (U_5 \oplus X_5)^{\perp \perp} \subset V \\
V_* \supset (Y_1 \oplus T_1)^{\perp \perp} \supset Y_1 \oplus T_1 \supset  T_1 & \supset (Y_2 \oplus T_2)^{\perp \perp}  \supset \cdots  \\ 
\cdots \supset  Y_4 \oplus T_4 \supset T_4 & \supset (Y_5 \oplus T_5)^{\perp \perp}  \supset Y_5 \oplus T_5 \supset T_5 \supset 0,
\end{align*}
where $T_i := (U_i \oplus X_i)^\perp$.

By the proof of Theorem~\ref{nblocks}, there are at most $2$ possibilities for each $U_i$.  Thus each $U_i$ must be either $\big( \big( \bigoplus_{j \leq i} X_j \big)^\perp \oplus Y_i \big)^\perp$ or $\big( \bigoplus_{j \geq i} Y_j \big)^\perp$.  
This enables us to list all the possibilities:
\begin{align*}
U_1 & =  0 
\textrm{ or } 
\Span\{ z \}   \\
U_2 & = X_1\oplus  \Span\{ z \} \\
U_3   & =  X_1\oplus X_2 \oplus \Span\{ z \} 
\textrm{ or }
X_1\oplus X_2 \oplus \Span\{ z , w_1 \}  \\
 U_4 & =  X_1\oplus X_2 \oplus X_3 \oplus \Span\{ z , w_1 , w_2 , w_3 \} \\
 U_5 & =  X_1\oplus X_2 \oplus X_3 \oplus X_4 \oplus \Span\{ z , w_1 , w_2 , w_3 , w_4 , w_5, w_6 \} \textrm{ or}\\
& \hspace{1em} \hspace{1ex} X_1\oplus X_2 \oplus X_3 \oplus X_4 \oplus \Span\{ z , w_1 , w_2 , w_3 , w_4 , w_5, w_6, w_{11} \}. \end{align*}
Observe that for every combination of choices, the necessary inclusions remain, i.e.\ $U_i \oplus X_i \subset U_{i+1}$.  The listed subspaces all satisfy $U_i = ((U_i \oplus X_i)^\perp \oplus Y_i)^\perp$, as noted immediately after Corollary~\ref{fullcor}.  Hence there are exactly $8$ self-normalizing parabolic subalgebras as desired, arising from the two possibilities each for $U_1$, $U_3$, and $U_5$.

\section{Levi components of parabolic subalgebras of $\so_\infty$ and $\sp_\infty$}

Assume $\g$ is $\so(V)$ or $\sp(V)$.  We omit the proofs, as they are similar to those given above.

\begin{theorem}
Let $\l$ be a subalgebra of $\g$.  There exists a parabolic subalgebra $\p$ of $\g$ such that $\l$ is a Levi component of $\p$ if and only if $\l$ is the direct sum of standard special linear subalgebras of $\g$ and a subalgebra 
$$\k =
\begin{cases}
\so(W) & \textrm{if } \g = \so(V) \\
\sp(W)  & \textrm{if } \g = \sp(V)
\end{cases}$$
for some subspace $W \subset V$ to which the restriction of the bilinear form on $V$ is nondegenerate.

Moreover, given a subalgebra $\l$ for which such parabolic subalgebras exist, one exists that induces an arbitrary order on the standard special linear direct summands of $\l$.
\end{theorem}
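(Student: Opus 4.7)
My plan is to adapt the methodology of Section~3 to the orthogonal and symplectic settings. The new ingredient is a (possibly trivial) distinguished direct summand $\k \in \{\so(W), \sp(W)\}$ arising from the ``self-dual'' portion of the self-taut generalized flag associated to $\p$, together with standard special linear summands occupying symmetric positions around it.

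For the only-if direction, let $\l$ be a Levi component of a parabolic subalgebra $\p$ of $\g$, and associate to $\p$ a self-taut generalized flag $\F$ in $V$ (any of the three in the $\so$-case, cf.\ \cite{DPWolf}, will do). I would decompose $\l = \bigoplus_{i \in I} \s_i$ into its simple direct summands and argue, imitating Lemma~\ref{notdiagonal}, that each $\s_i$ is standard in $\g$. Using the classification in \cite{DimitrovP2} of subalgebras of $\so_\infty$ and $\sp_\infty$ isomorphic to $\sl_\infty$, $\so_\infty$, or $\sp_\infty$ (together with the analogous finite-dimensional statement), each simple summand $\s_i$ either preserves a nondegenerate subspace $W_i \subset V$ on which it acts irreducibly as $\so(W_i)$ (resp.\ $\sp(W_i)$), or preserves a pair of isotropic subspaces $X_i, Y_i \subset V$ paired nondegenerately by the form and coincides with $\Lambda(\sl(X_i, Y_i))$ (resp.\ $S(\sl(X_i, Y_i))$). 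Stability of $\F$ under $\s_i$, combined with the maximality of $\l$ among locally semisimple subalgebras of $\p$ (\cite[Theorem~4.3]{DP}), forces each $\s_i$ to be standard in the appropriate sense. At most one summand is of $\so/\sp$-type: two such summands would require two ``self-dual'' pairs $F'_\alpha \subsetneq F''_\alpha$ in $\F$ with nondegenerate induced form on $F''_\alpha / F'_\alpha$ and commuting actions, which is incompatible with the total order on $\F$ and the structure of the order-reversing bijection between the two parts of $C$ recalled before Theorem~\ref{strongerhypotheses}.

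For the if direction, assume $\l = \bigoplus_{i \in I} \sl(X_i, Y_i) \oplus \k$ with commuting standard special linear summands and $\k \in \{\so(W), \sp(W)\}$ (possibly $W = 0$), equipped with a specified order on $I$. Commutation forces all $X_i$, $Y_i$ to lie in $W^\perp$ and $\langle X_i, Y_j \rangle = \langle X_i, X_j \rangle = \langle Y_i, Y_j \rangle = 0$ for $i \neq j$, with each $X_i, Y_i$ itself isotropic. I would construct a self-taut generalized flag $\F$ in $V$ arranged symmetrically about the central pair whose quotient carries $W$: choose subspaces $U_i \subset W^\perp$ satisfying $U_i \oplus X_i \subset U_j$ for $i < j$ together with an identity of the form $U_i = ((U_i \oplus X_i)^\perp \oplus Y_i)^\perp$ (verified via the analogue of Lemma~\ref{tripleperp}, taking $T = \bigoplus_{k < i} X_k + W$), then take $\F$ to be a semiclosed generalized flag maximal among those containing the chain
\begin{equation*}
0 \subset U_1 \subset U_1 \oplus X_1 \subset \cdots \subset U_n \oplus X_n \subset (U_n \oplus X_n)^\perp \subset \cdots \subset U_1^\perp \subset V,
\end{equation*}
in which the central pair $U_n \oplus X_n \subsetneq (U_n \oplus X_n)^\perp$ has quotient isomorphic to $W$ as a bilinear space. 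By \cite[Theorem~6.6]{DP}, $\St_\F \cap \g$ is then a self-normalizing parabolic subalgebra of $\g$; an argument parallel to Proposition~\ref{weakerhypotheses} identifies $\l$ as a Levi component. The specified order on $I$ directly controls the order of the standard special linear direct summands of $\l$ induced by $\F$.

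The main obstacle I anticipate is the correct bookkeeping around the central pair of $\F$: ensuring that self-tautness forces the $Y_i$ to appear automatically as complements on the $\perp$-side of the flag (which replaces the external pairing $V \times V_* \to \C$ used in Section~3) and that the central quotient carries precisely the desired bilinear form. In the $\so$-case, one should also note that the non-injectivity of the flag-to-parabolic correspondence \cite{DPWolf} produces up to three parabolic subalgebras from a single flag; this affects counting but not existence, so it does not obstruct the theorem.
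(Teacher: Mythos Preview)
Your proposal is correct and follows precisely the route the paper indicates: the paper omits the proof entirely, stating only that it is ``similar to those given above,'' i.e.\ to Section~3, and your adaptation of Lemma~\ref{notdiagonal}, Lemma~\ref{tripleperp}, Proposition~\ref{weakerhypotheses}, and Theorem~\ref{firstcharacterization} to the self-taut setting is exactly that. One small point of bookkeeping: your displayed chain and the phrase ``$U_n \oplus X_n$'' implicitly treat $I$ as finite, whereas the construction must also accommodate countable $I$ with no maximal element (in which case the central pair $F \subset G$ arises as a union/intersection rather than from a top index); this is cosmetic and your argument goes through unchanged once the notation is adjusted.
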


\begin{theorem} \label{iffso}
Let $\p$ be a parabolic subalgebra of $\so(V)$, with an associated self-taut generalized flag $\F$.  Let $F$ denote the union of all isotropic subspaces $F''_\alpha$ for $\alpha \in A$, and let $G$ denote the intersection of all coisotropic subspaces $F'_\alpha$ for $\alpha \in A$.

Then $\l$ is a Levi component of $\p$ if and only if 
there exist isotropic subspaces $X_\gamma \subset V$ and $Y_\gamma \subset V$ for each $\gamma \in D$ with
$$F''_\gamma = F'_\gamma \oplus X_\gamma \textrm{ and } G''_\gamma = G'_\gamma \oplus Y_\gamma,$$ 
 as well as a subspace $W$ with
$W=0$ if $\dim G / F \leq 2$ and otherwise
$$G = F \oplus W$$
such that
$$\l = \so(W) \oplus \bigoplus_{\gamma \in D} \Lambda(\sl(X_\gamma, Y_\gamma)).$$
\end{theorem}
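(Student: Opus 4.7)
The plan is to mirror the proof strategy of Theorem~\ref{iffgl}, with the additional task of pinning down the orthogonal direct summand $\so(W)$.

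First, I would establish an orthogonal analog of Lemma~\ref{notdiagonal}: every Levi component $\l$ of a parabolic subalgebra of $\so(V)$ admits a decomposition $\l = \so(W) \oplus \bigoplus_{i \in I} \Lambda(\sl(X_i, Y_i))$, where $W \subset V$ is a subspace on which the form is nondegenerate and the $X_i, Y_i \subset V$ are isotropic with $\langle X_i, Y_j \rangle = 0$ for $i \neq j$. Combining the decomposition of $\l$ into simple direct summands \cite{DimitrovP2} with the classification in \cite{DimitrovP2} of subalgebras of $\gl(V,V_*)$ isomorphic to $\sl_\infty$, $\so_\infty$, or $\sp_\infty$ handles the infinite-dimensional summands (the $\sp$-type possibility is incompatible with $\l \subset \so(V)$); the finite-dimensional summands are treated by a direct argument following the proof of Lemma~\ref{notdiagonal}. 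The same stabilization argument then shows that the $\sl$-type summands are standard special linear subalgebras $\Lambda(\sl(X_i, Y_i))$ of $\so(V)$ and that $\F$ is stable under $\l$.

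For the \emph{if} direction, I would invoke the orthogonal analog of Theorem~\ref{strongerhypotheses} from \cite[Section 6]{DP} to realize a locally reductive part of the self-normalizing parabolic subalgebra $\St_\F \cap \so(V)$ in the form $\so(W) \oplus \bigoplus_{\gamma \in D} \Lambda(\gl(\tilde X_\gamma, \tilde Y_\gamma))$. Replacing the $\tilde X_\gamma, \tilde Y_\gamma$ by the given $X_\gamma, Y_\gamma$ changes the expression only modulo the linear nilradical, by the tensor calculation from the proof of Proposition~\ref{weakerhypotheses}. Since $\p$ is defined by trace conditions on its normalizer, \cite[Proposition 4.9]{DP} yields that $\so(W) \oplus \bigoplus_{\gamma \in D} \Lambda(\sl(X_\gamma, Y_\gamma))$ is a Levi component of $\p$.

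For the \emph{only if} direction, starting from the decomposition above, the argument in the proof of Theorem~\ref{iffgl} applied to the $\sl$-type summands produces an injective map $\kappa : I \to D$ satisfying $F''_{\kappa(i)} = F'_{\kappa(i)} \oplus X_i$ and $G''_{\kappa(i)} = G'_{\kappa(i)} \oplus Y_i$; surjectivity of $\kappa$ follows by the same action-on-$\tilde X_d$ computation used in Theorem~\ref{iffgl}. To identify $W$, I would use that $\so(W)$ commutes with every $\Lambda(\sl(X_i, Y_i))$, which forces $W \subset \bigl(\bigoplus_i (X_i \oplus Y_i)\bigr)^\perp$; combined with the self-taut condition on $\F$, this places $W$ inside a complement of $F$ in $G$. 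Maximality of $\l$ as a locally semisimple subalgebra \cite[Theorem 4.3]{DP} then forces $W$ to be a maximal nondegenerate complement of $F$ in $G$: when $\dim G/F \geq 3$, this yields $G = F \oplus W$, while when $\dim G/F \leq 2$ no nontrivial semisimple orthogonal summand fits (as $\so_1 = 0$ and $\so_2$ is abelian), forcing $W = 0$.

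The main obstacle is the orthogonal summand analysis: verifying that maximality truly forces $W$ to fill the entire nondegenerate gap rather than a proper subspace, and treating the boundary cases $\dim G/F \in \{1,2\}$ correctly. A secondary subtlety is the three-to-one nature of the stabilizer map from self-taut flags to self-normalizing parabolic subalgebras of $\so(V)$ (noted at the end of Section~2.2), which may require checking that the Levi component description is invariant under the choice of associated self-taut flag among the up-to-three possibilities.
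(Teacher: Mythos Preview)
The paper omits the proof of this theorem entirely, stating only that it is similar to the proofs given above for $\gl(V,V_*)$ and $\sl(V,V_*)$. Your proposal to mirror the argument of Theorem~\ref{iffgl} (via an orthogonal analog of Lemma~\ref{notdiagonal} and Proposition~\ref{weakerhypotheses}), together with the additional analysis of the orthogonal summand $\so(W)$ and the flagged subtleties, is precisely the adaptation the paper has in mind and appears to be a sound sketch.
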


\begin{theorem} \label{iffsp}
Let $\p$ be a parabolic subalgebra of $\sp(V)$, with the associated self-taut generalized flag $\F$.  Let $F$ denote the union of all isotropic subspaces $F''_\alpha$ for $\alpha \in A$, and let $G$ denote the intersection of all coisotropic subspaces $F'_\alpha$ for $\alpha \in A$.

Then $\l$ is a Levi component of $\p$ if and only if 
there exist isotropic subspaces $X_\gamma \subset V$ and $Y_\gamma \subset V$ for each $\gamma \in D$ with
$$F''_\gamma = F'_\gamma \oplus X_\gamma \textrm{ and } G''_\gamma = G'_\gamma \oplus Y_\gamma,$$ 
as well as a subspace $W$ with 
$$G = F \oplus W$$
such that
$$\l = \sp(W) \oplus \bigoplus_{\gamma \in D} S(\sl(X_\gamma, Y_\gamma)).$$
\end{theorem}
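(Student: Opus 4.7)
The plan is to follow the blueprint of Theorem~\ref{iffgl}, employing the symplectic analogs of Lemma~\ref{notdiagonal}, Theorem~\ref{strongerhypotheses}, and Proposition~\ref{weakerhypotheses}; the genuinely new feature compared to the linear case is the extra direct summand $\sp(W)$, which captures the part of $\l$ acting on the nondegenerate quotient $G/F$.

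For the ``if'' direction, fix $X_\gamma, Y_\gamma$ for $\gamma \in D$ and $W$ as in the statement. First I would establish the symplectic analog of Theorem~\ref{strongerhypotheses}: extending $X_\gamma, Y_\gamma$ to isotropic complements $\tilde X_\gamma, \tilde Y_\gamma$ for all $\gamma \in C$ with $\langle \tilde X_\gamma, \tilde Y_\eta \rangle = 0$ for $\gamma \neq \eta$, the intersection of $\p$ with $\sp(W) \oplus \bigoplus_{\gamma \in C} S(\gl(\tilde X_\gamma, \tilde Y_\gamma))$ is a locally reductive part of $\p$. Following the pattern of Proposition~\ref{weakerhypotheses}, swapping $\tilde X_\gamma, \tilde Y_\gamma$ for the prescribed $X_\gamma, Y_\gamma$ alters the subalgebra only modulo $\n_\p$, since the cross terms $F'_\gamma \otimes G''_\gamma + F''_\gamma \otimes G'_\gamma$ (pushed into $\sp(V)$ via $S$) lie in the linear nilradical. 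Restricting the $\gl$-blocks to $\sl$-blocks and using \cite[Proposition~4.9]{DP} to transfer between $\p$, its normalizer, and $\St_\F \cap \sp(V)$ then identifies $\sp(W) \oplus \bigoplus_{\gamma \in D} S(\sl(X_\gamma, Y_\gamma))$ as a Levi component of $\p$.

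For the ``only if'' direction, let $\l$ be an arbitrary Levi component of $\p$. The symplectic analog of Lemma~\ref{notdiagonal}, relying on \cite{DimitrovP2}, decomposes
\[
\l = \sp(W_0) \oplus \bigoplus_{i \in I} S(\sl(X_i, Y_i)),
\]
with $X_i, Y_i \subset V$ isotropic, $\langle X_i, Y_j \rangle = 0$ for $i \neq j$, and $W_0 \subset V$ nondegenerate and paired trivially with every $X_i$ and $Y_i$; the same argument produces an injection $\kappa : I \to D$ with $F'_{\kappa(i)} \oplus X_i \subset F''_{\kappa(i)}$ and the dual condition on $G$. To show $\kappa$ is a bijection yielding the claimed complementation, mimic the concluding computation of the proof of Theorem~\ref{iffgl}: fix $d \in D$ and $v \in \tilde X_d$, then from
\[
F''_d = F'_d + (\n_\p + [\p,\p]) \cdot v,
\]
together with the two presentations of $\n_\p + [\p,\p]$ in terms of $\tilde X_\gamma, \tilde Y_\gamma$ and of $X_i, Y_i$ (using that $\sp(W_0) \cdot v \subset W_0 \subset G \subset F'_d$), deduce $d = \kappa(i)$ for some $i$ and $F''_d = F'_d \oplus X_i$; a symmetric argument gives $G''_d = G'_d \oplus Y_i$. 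Finally, the maximal trivial $\sp(W_0)$-submodule of $V$ is $W_0^\perp$ and $\sp(W_0) \cdot V \subset W_0$, so stability under $\F$ forces $W_0 \subset G$ and $W_0 \cap F = 0$; the maximal locally semisimple property of $\l$ (\cite[Theorem~4.3]{DP}) rules out any strictly larger symplectic block in $G/F$, so $G = F \oplus W_0$ and we set $W := W_0$.

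The main obstacle is the symplectic block $\sp(W)$, which has no counterpart in Theorem~\ref{iffgl}. I anticipate two genuinely new steps: first, justifying that the ambient form on $V$ descends to a nondegenerate symplectic form on $G/F$ (for which self-tautness of $\F$ together with the definitions of $F$ and $G$ ought to suffice); second, lifting the abstract symplectic summand $W_0$ to a subspace of $V$ realizing this nondegenerate complement of $F$ in $G$, and using the maximality of $\l$ to forbid both enlarging $W_0$ within $G/F$ and trading any $S(\sl(X_i, Y_i))$ for a larger symplectic block.
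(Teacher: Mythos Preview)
Your overall strategy matches the paper's: the paper omits the proof of Theorem~\ref{iffsp} entirely, stating only that it is similar to the arguments already given for $\gl(V,V_*)$ and $\sl(V,V_*)$, which is exactly the blueprint you follow.

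There is, however, a concrete error in your ``only if'' argument. For $d \in D$ the pair $F'_d \subset F''_d$ is isotropic (by the definition of $C$ in the self-taut case), so $F'_d \subset F''_d \subset F \subset G$; the containment $G \subset F'_d$ that you invoke is false. What you actually need is $\sp(W_0) \cdot v = 0$ for $v \in \tilde X_d$, and this follows instead from $W_0 \subset G \subset G'_d = (F''_d)^\perp$, whence $\langle v, W_0 \rangle = 0$. Note also that you are using $W_0 \subset G$ here before you establish it in the subsequent paragraph; the cleanest fix is to absorb the verification that $F \oplus W_0 \subset G$ into the symplectic analog of Lemma~\ref{notdiagonal} (the same argument that places each $X_i$ in a specific $F''_{\kappa(i)}$ will place $W_0$ in the unique ``middle'' pair, since $W_0$ nondegenerate forces the containing $F''_\beta$ to be non-isotropic and the complementary $F'_\beta$ to be non-coisotropic). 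With these corrections the argument goes through.
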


The parabolic subalgebra in the following example has Levi components isomorphic to $\g$. 
Let $V$ be the vector space with basis $\{ v_i \mid  i \in \Z_{\neq 0} \}$, and let $\langle \cdot , \cdot \rangle : V \times V \rightarrow \C$ be the nondegenerate pairing extending
$$\langle v_i , v_j \rangle =
\begin{cases}
 0 & \textrm{if } i \neq -j \\
1 & \textrm{if }  i = -j > 0
\end{cases}$$
symmetrically (or antisymmetrically). Take $$W := \Span \{ v_1 + v_i \mid i \neq \pm 1 \},$$
and note that the restriction $\langle \cdot , \cdot \rangle |_{W \times W}$ is nondegenerate. 
We will show that there is a unique parabolic subalgebra of $\g$ with $\so(W)$ (resp. $\sp(W)$) as a Levi component.  In order to apply Theorem~\ref{iffso} (resp., Theorem~\ref{iffsp}), we consider self-taut generalized flags $\F$ such that $W$ provides a vector space complement for the single immediate predecessor-successor pair in $D$.

That is, any self-taut generalized flag $\F$ in $V$ such that $\St_\F \cap \g$ has the prescribed Levi component must have an immediate predecessor-successor pair of the form $U \subset U \oplus W$.  Since $W$ is neither isotropic nor coisotropic, $U$ must be isotropic, with $U = (U \oplus W)^\perp$.  This implies $W^{\perp \perp} \cap W^\perp \subset U$.  We compute $W^{\perp \perp} \cap W^\perp = \C v_1$.  As $\dim V / W = 2$, we conclude that $U = \C v_1$, so $\F$ is the self-taut generalized flag
$$0 \subset \C v_1 \subset \C v_1 \oplus W \subset V.$$ 
This yields a single self-normalizing parabolic subalgebra $\p := \St_\F \cap \g$ as desired.  There are no nontrivial trace conditions on $\p$, as there are no $\gl_\infty$ blocks in a locally reductive part; hence $\p$ is the unique parabolic subalgebra of $\g$ with the prescribed Levi component.  

Note that $\so(W)$ (or $\sp(W)$) is a maximal locally reductive subalgebra of $\p$.  On the other hand, any reductive part of $\p$ is isomorphic to $\g \oplus \C$.  As in the example at the end of Section~\ref{counting}, we have found a maximal locally reductive subalgebra of $\p$ which is not a locally reductive part of $\p$.

Now that we have considered the three special cases of $\sl_\infty$, $\so_\infty$, and $\sp_\infty$, the analogous statements hold for locally reductive finitary Lie algebras $\g$.  Then $[\g,\g]$ is locally simple, hence $[\g,\g] = \bigoplus_{i \in I} \s_i$ for some simple finitary Lie algebras $\s_i$.  Let $\l$ be a Levi component of a parabolic subalgebra $\p$ of $\g$.  Then $\p \cap \s_i$ is a parabolic subalgebra of $\s_i$ for each $i \in I$.  It must be the case that $\l = \bigoplus_{i \in I} \l \cap \s_i$, and moreover $\l \cap \s_i$ is a Levi component of $\p \cap \s_i$.  Up to isomorphism the only infinite-dimensional simple finitary Lie algebras are $\sl_\infty$, $\so_\infty$, and $\sp_\infty$ \cite{BaranovStrade}.  Thus the results of this paper are enough to classify Levi components of parabolic subalgebras in this generality.

\bibliographystyle{amsplain}

\end{document}